\crefname{equation}{equation}{equations}
\crefname{figure}{figure}{figures}
\crefname{equation}{equation}{equations}
\crefname{figure}{figure}{figures}
\def\N{{\mathbb N}}    % naturels
\def\Z{{\mathbb Z}}    % entiers relatifs
\def\R{{\mathbb R}}    % réels
\renewcommand{\phi}{\varphi} % Le bon phi
\newcommand{\libr}{\llbracket} % left integers bracket
\newcommand{\ribr}{\rrbracket} % right integers bracket
\DeclareMathOperator{\Ima}{Im}
\DeclareMathOperator{\Ker}{Ker}
\DeclareMathOperator{\Coker}{Coker}
\DeclareMathOperator{\Span}{Span}
\newcommand{\squarediag}[8]{
    \begin{tikzcd}[ampersand replacement=\&]
        #3 \arrow[r,"#7"] \& #4 \\
        #1 \arrow[r,"#5"'] \arrow[u, "#6"] \& #2 \arrow[u,"#8"']
    \end{tikzcd}
    }
    \newcommand{\ses}[6][0]{
        \ifodd#1
        \begin{tikzcd}[ampersand replacement=\&]
            #2 \arrow[r,"#5"] \& #3 \arrow[r,"#6"] \& #4
        \end{tikzcd}
        \else
        \begin{tikzcd}[ampersand replacement=\&]
            0 \arrow[r] \& #2 \arrow[r,"#5"] \& #3 \arrow[r,"#6"] \& #4 \arrow[r] \& 0
        \end{tikzcd}
        \fi
        }
\newcommand{\cat}[1]{{\normalfont\mathsf{#1}}} % To write the name of a category
\renewcommand{\lim}[1][ ]{\varprojlim_{#1}}
\DeclareMathOperator{\Hom}{Hom}
\DeclareMathOperator{\End}{End}
\DeclareMathOperator{\Ran}{Ran}
\DeclareMathOperator{\Lan}{Lan}
\def\field{\textnormal{\textbf{k}}}
\def\mor{{\rho}}
\def\basemod{M}
\def\abscisse{X}
\def\ordinates{Y}
\def\poset{\abscisse\times\ordinates}
\def\omod{N}
\def\zeromod{\cat{0}}
\def\rec{{R}}
\def\identity{\textnormal{Id}}
\NewDocumentCommand{\indimod}{d[] d<>}{%
  \IfNoValueTF{#1}{\def\rectemp{\rec}}{\def\rectemp{#1}}
  \IfNoValueTF{#2}{\field_{\rectemp}}{\field_{\rectemp,#2}}%
}
\newcommand{\rectangles}{\mathcal{R}}
\newcommand{\rank}{r}
\def\letterfilt{V}
\def\letterdfilt{W}
\NewDocumentCommand{\filt}{d[] m d<>}{%
\IfNoValueTF{#1}{\def\rectemp{\rec}}{\def\rectemp{#1}}
\IfNoValueTF{#3}{\letterfilt_{\rectemp}^{#2}}{\letterfilt_{\rectemp,#3}^{#2}}%
}
\newcommand{\gen}{\gamma}
\newcommand{\rel}{\eta}
\newcommand{\rrel}{\zeta}
\DeclareMathOperator{\grade}{gr}
\DeclareMathOperator{\lub}{lub}
\NewDocumentCommand{\dfilt}{d[] m d<>}{%
\IfNoValueTF{#1}{\def\rectemp{\rec}}{\def\rectemp{#1}}
\IfNoValueTF{#3}{\letterdfilt_{\rectemp}^{#2}}{\letterdfilt_{\rectemp,#3}^{#2}}%
}
\NewDocumentCommand{\filtrate}{d[] d<>}{%
  \IfNoValueTF{#1}{\def\rectemp{\rec}}{\def\rectemp{#1}}
  \IfNoValueTF{#2}{\basemod_{\rectemp}}{\basemod_{\rectemp,#2}}%
}
\def\filt{F}
\def\grid{G}
\newcommand{\dart}[1]{\mathcal{D}_{#1}}
\NewDocumentCommand{\totallysubpos}{o}{\IfNoValueTF{#1}{\textnormal{TSub}}{\textnormal{TSub}(#1)}}
\NewDocumentCommand{\indec}{m d<>}{%
  \IfNoValueTF{#2}{\basemod^{#1}}{\basemod^{#1}_{#2}}%
}
\NewDocumentCommand{\indecgrid}{m d<>}{%
  \IfNoValueTF{#2}{\omod^{#1}}{\omod^{#1}_{#2}}%
}
\newcommand{\op}{\mathrm{op}}
\title{On rectangle-decomposable 2-parameter persistence modules}
\author{Magnus Bakke Botnan}{Vrije Universiteit Amsterdam, Amsterdam, Netherlands}{m.b.botnan@vu.nl}{}{}%{[orcid]}{}
\author{Vadim Lebovici}{\'Ecole Normale Sup\'erieure, Paris, France}{vadim.lebovici@ens.fr}{}{}%{[orcid]}{}
\author{Steve Oudot}{Inria Saclay, Palaiseau, France}{steve.oudot@inria.fr}{}{}%{[orcid]}{}
\authorrunning{M.\,Botnan, V.\,Lebovici and S.\,Oudot}%TODO mandatory. First: Use abbreviated first/middle names. Second (only in severe cases): Use first author plus 'et al.'
\keywords{topological data analysis, multiparameter persistence, rank invariant}%TODO mandatory; please add comma-separated list of keywords
\newcommand{\mymatrix}[1]{\begin{psmallmatrix} #1 \end{psmallmatrix}}
\begin{document}

\maketitle

\begin{abstract}
  This paper addresses two questions: (a) can we identify a sensible
  class of 2-parameter persistence modules on which the rank invariant
  is complete? (b) can we determine efficiently whether a given
  2-parameter persistence module belongs to this class?  We provide
  positive answers to both questions, and our class of interest is
  that of rectangle-decomposable modules. Our contributions include:
  on the one hand, a proof that the rank invariant is complete on
  rectangle-decomposable modules, together with an inclusion-exclusion
  formula for counting the multiplicities of the summands; on the
  other hand, algorithms to check whether a module induced in homology
  by a bifiltration is rectangle-decomposable, and to decompose it in
  the affirmative, with a better complexity than state-of-the-art
  decomposition methods for general 2-parameter persistence
  modules. Our algorithms are backed up by a new structure theorem,
  whereby a 2-parameter persistence module is rectangle-decomposable
  if, and only if, its restrictions to squares are. This local
  characterization is key to the efficiency of our algorithms, and it
  generalizes previous conditions derived for the smaller class of
  block-decomposable modules. It also admits an algebraic formulation
  that turns out to be a weaker version of the one for
  block-decomposability. By contrast, we show that general
  interval-decomposability does not admit such a local
  characterization, even when locality is understood in a broad
  sense. Our analysis focuses on the case of modules indexed over
  finite grids, the more general cases are left as future work.
\end{abstract}

%%%%%%%%%%%%%%%%%%
\section{Introduction}
A \emph{persistence module} $M$ over a subset $U\subseteq \R^d$ is a collection of vector spaces $\{M_t\}_{t\in U}$ and linear maps $\mor_s^t:=M(s\leq t)\colon M_s\to M_t$ with the property that $\mor_s^s$ is the identity map and $\mor_t^u \circ \mor_s^t = \mor_s^u$ for all $s\leq t\leq u\in U$. Here $s\leq t$ if and only if $s_i\leq t_i$ for all $i\in \{1, 2, \ldots, d\}$. In the language of category theory, a persistence module $M$ is a functor $M\colon U\to \cat{vec}$ where $\cat{vec}$ is the category of vector spaces and the partially ordered set $U$ is considered as a category in the obvious way. In this setting, {\em morphisms} between persistence modules are natural transformations $M\Rightarrow N$ between functors, defined by collections of linear maps $\{\phi_t: M_t\to N_t\}_{t\in U}$ such that $\phi_t\circ M(s\leq t) = N(s\leq t) \circ \phi_s$ for all $s\leq t\in U$. Their kernels, images and cokernels, as well as products, direct sums and quotients of persistence modules, are defined pointwise at each index $t\in U$. Similarly, an \emph{isomorphism} between two persistence modules is a natural isomorphism between them. We will refer to the case $d=1$ as \emph{single-parameter persistence}, and for $d\geq 2$ we will use the term \emph{multi-parameter persistence}.
\begin{remark}
Throughout this paper we will work exclusively with finite-dimensional vector spaces over a fixed field $\field$. When finite-dimensionality is emphasized we will refer to the persistence module as being pointwise finite-dimensional (pfd). 
\end{remark}

Single-parameter persistence modules are typically obtained through the application of homology to a filtered topological space. This process is known as \emph{persistent homology} and has found a wide range of applications to the sciences, as well as to other parts of mathematics such as symplectic geometry. See \cite{edelsbrunner2008persistent, oudot2015persistence} for an introduction to persistent homology. What makes such persistence modules particularly amenable to data analysis is that they can be completely described by multisets of intervals in $\R$ called \emph{barcodes}~\cite{Crawley-Boevey2012}. Such a collection of intervals can then in turn be used to extract topological information from the data at hand, and further utilized in statistics and machine learning. 
We now give an example of this structure theorem in the simple case of $U = \{1,2,3\}\subseteq \R$.

\begin{example}\label{ex:1D}
Consider the following sequence of vector spaces and linear maps
\[ \field^2 \xrightarrow{\begin{bmatrix} 1 & 1 \\ 0 & 1 \end{bmatrix}} \field^2 \xrightarrow{[1~-1]} \field.\]
By replacing the basis $\{e_1, e_2\}$ of the middle vector space $\field^2$ with the basis $\{e_1, e_1+e_2\}$ we get the following matrix representations of the linear maps 
\[ \field^2 \xrightarrow{\begin{bmatrix} 1 & 0 \\ 0 & 1 \end{bmatrix}} \field^2 \xrightarrow{[1~0]} \field = \left( \field\xrightarrow{1} \field\xrightarrow{1} \field\right) \oplus \left (\field\xrightarrow{1} \field \to 0\right).\]
The two persistence modules on the right-hand side are uniquely specified by their supports $\{1,2,3\}$ and $\{1,2\}$, respectively. Their supports give rise to the barcode which in this case is given by $\{\{1,2,3\}, \{1,2\}\}$. 
\end{example}

As illustrated by \Cref{ex:1D}, a persistence module can be recovered from its barcode thanks to the notion of \emph{indicator modules}: for $\poset \subseteq \R^2$ and a subset $Q\subseteq\poset$, the indicator module of $Q$, denoted $\indimod[Q]$, is defined by
\begin{align*}
   \field_{Q,t}=
    \begin{cases}
        \field &(t\in Q) \\
        0 &(t\notin Q)
    \end{cases} & & 
      \indimod[Q](s\leq t) =
    \begin{cases}
        \identity_\field &\mbox{ if } s \mbox{ and } t \in Q, \\
        0 &\mbox{ else}.
    \end{cases}
\end{align*}
By convention, we set $\indimod[\emptyset]= \zeromod$. A persistence module is an \emph{interval module} if it is the indicator module of an interval\footnote{In the poset $X\times Y$, we say that $Q$ is an interval if it is convex and zigzag path-connected, i.e if between any two points $p,q\in Q$, there exists a zigzag path $p\leq p_1 \geq p_2\leq \dots \geq p_n \leq q$ with $p_i$'s in $Q$.}.
Note that, just as choosing a basis for a vector space is not canonical, there may be many ways of decomposing a single-parameter persistence module into a direct sum of such interval modules. However, just as for the dimension of a finite-dimensional vector space, the associated barcode given by the multiset of interval supports of the summands is independent of the chosen decomposition~\cite{Azumaya1950}.

Another desirable property of single-parameter persistence modules~$M$ is that they are completely described up to isomorphism by the \emph{rank invariant}, i.e. the collection of ranks~$\rank(s,t) = \textnormal{rank}(\basemod(s\leq t))$  for all $s\leq t$. This can easily be verified in the previous example, and more generally, for any pfd persistence module~$M$ indexed over a finite set~$\libr 1, n\ribr$, the following inclusion-exclusion formula (also known as the {\em persistence measure}~\cite{chazal2016structure,cohen2007stability}) gives the multiplicity $m(s,t)$ of any interval $\libr s, t\ribr$ in the barcode of~$M$: 
\begin{equation}\label{eq:incl_excl_1d}
m(s,t) = \rank(s,t) - \rank(s-1,t) - \rank(s,t+1) + \rank(s-1,t+1).
% m(s,t) = \rank(\mor_s^t) - \rank(\mor_{s-1}^t) - \rank(\mor_s^{t+1}) + \rank(\mor_{s-1}^{t+1}).
\end{equation}

Many applications do however naturally come equipped with multiple parameters, and for such applications it is natural to consider multi-parameter persistence modules, see e.g. the introduction of \cite{bauer2019cotorsion} for an example of how multi-parameter persistence connects to hierarchical clustering. Let us first consider the simplest instantiation of 2-parameter persistence modules, namely modules indexed by the square $S = \{a=(0,0), b=(1,0), c=(0,1), d=(1,1)\}\subseteq \R^2$. 
\begin{example}\label{ex:square}
The persistence module on the left-hand side below can be transformed into the one on the right-hand side via a change of basis at the vertices: 
\[  \begin{tikzcd}[arrows=-stealth, ampersand replacement=\&, column sep=4em, row sep=3em]%[column sep=-0.5em, row sep=0.8em]
%\field^3\rar{\mymatrix{0 & 1 & 1\\0 & 0 & 1}} \& \field^2 \& \& \field^3\rar{\mymatrix{0 & 1 & 0\\0 & 0 & 1}} \& \field^2 \\
%\field^2\rar{\mymatrix{1 & 1\\0 & 1}}\uar{\mymatrix{1 & 1\\0 & 0 \\ 0 & 1}} \& \field^2 \uar[swap]{\mymatrix{0 & 1\\0 & 1}}  \& \& \field^2\rar{\mymatrix{1 & 0\\0 & 1}}\uar{\mymatrix{1 & 0\\0 & 0 \\ 0 & 1}} \& \field^2 \uar[swap]{\mymatrix{0 & 0\\0 & 1}}
\field^2\rar{\mymatrix{1 & -1\\0 & 1}} \& \field^2 \& \& \field^2\rar{\mymatrix{1 & 0 \\0 & 1}} \& \field^2 \\
\field\rar{1}\uar{\mymatrix{1\\1}} \& \field \uar[swap]{\mymatrix{0\\1}}  \& \& \field\rar{1}\uar{\mymatrix{0\\1}} \& \field \uar[swap]{\mymatrix{0\\1}}
\end{tikzcd} \]
In turn, the persistence module on the right-hand side  is the direct sum
%\[  \begin{tikzcd}[arrows=-stealth, ampersand replacement=\&, column sep=4em, row sep=3em]%[column sep=-0.5em, row sep=0.8em]
%\field \rar{0} \& 0 \& \&\field \rar{1} \& \field \& \& \field \rar{1} \& \field \  \\
%\field \rar{1} \uar{1} \& \field \uar{0} \& \&0 \rar{0} \uar{0} \& 0 \uar{0} \& \& \field \rar{1} \uar{1} \& \field \uar{1} 
%\end{tikzcd} \]
\[  \begin{tikzcd}[arrows=-stealth, ampersand replacement=\&, column sep=4em, row sep=3em]
\field \rar{1} \& \field  \\
0 \rar{0} \uar{0} \& 0 \uar[swap]{0} \end{tikzcd}
\quad \oplus \quad
\begin{tikzcd}[arrows=-stealth, ampersand replacement=\&, column sep=4em, row sep=3em]
\field \rar{1} \& \field  \\
\field \rar{1} \uar{1} \& \field \uar[swap]{1}\end{tikzcd}\]
Just as in \cref{ex:1D}, these persistence modules are completely defined by their support.  We define the barcode of the aforementioned persistence module to be the (multi-)set of supports of its summands, namely
$\{\{c,d\}, \{a,b,c,d\}\}$.
%$\{\{a,b,c\}, \{c,d\}, \{a,b,c,d\}\}$.
\end{example}
Although commutative diagrams like the one in the previous example may appear unwieldy at first glance, such persistence modules can --- just as in the single-parameter case --- be completely described (up to isomorphism) by a multiset of elements from
\begin{equation}
  \label{eq:def-int-square}
    \mathcal{I}:=\{\{a\},\{b\}, \{c\}, \{d\},\{a,b\},\{a,c\},\{b,d\},\{c,d\},\{a,b,c\},\{b,c,d\},\{a,b,c,d\}\},
\end{equation}
called \emph{intervals} in the $2\times 2$~grid. See e.g. Figure 13 in \cite{escolar2016persistence}.
However, in contrast to the single-parameter case, the rank invariant on  persistence modules indexed by $S$ is no longer a {\em complete} invariant, i.e. it does not fully determine the  isomorphism type of such modules. For instance, two persistence modules with barcodes $\{\{a,b,c\}, \{a\}\}$ and $\{\{a,b\}, \{\{a,c\}\}$ are non-isomorphic yet exhibit the same rank invariant. 
\begin{example}
  \label{ex:3-2-indecomposable}
Consider the following two persistence modules:
%have the same rank invariant but different direct-sum decompositions:
%
\[  \begin{tikzcd}[arrows=-stealth, ampersand replacement=\&, column sep=4em, row sep=3em]
\field\rar{\mymatrix{1\\0}} \& \field^2\rar{\mymatrix{1& 0}} \& \field \& \& \field\rar{\mymatrix{1\\1}} \& \field^2\rar{\mymatrix{1& 0}} \& \field
\\
0\rar{0}\uar{0} \& \field \uar{\mymatrix{1\\0}}\rar{1} \& \field \uar{1}
\& \& 0\rar{0}\uar{0} \& \field \uar{\mymatrix{1\\0}}\rar{1} \& \field \uar{1}
\end{tikzcd} \]
%
%\[  \begin{tikzcd}[arrows=-stealth, ampersand replacement=\&, column sep=4em, row sep=3em]
%\field\rar{\mymatrix{1\\0}} \& \field^2\rar{\mymatrix{1& 0\\0& 1}} \& \field^2 \& \& \field\rar{\mymatrix{1\\0}} \& \field^2\rar{\mymatrix{0& 0\\1& 1}} \& \field^2
%\\
%0\rar{0}\uar{0} \& \field \uar{\mymatrix{0\\1}}\rar{1} \& \field \uar{\mymatrix{0\\1}}
%\& \& 0\rar{0}\uar{0} \& \field \uar{\mymatrix{0\\1}}\rar{1} \& \field \uar{\mymatrix{0\\1}}
%\end{tikzcd} \]
%
The diagram to the left can easily be seen to be composed of two interval summands in the $3\times 2$~grid. By contrast, the diagram to the right is indecomposable: there exists no change of basis for which this persistence module can be written as a direct sum of persistence modules in a non-trivial way. Again, the two modules have the same rank invariant.
\end{example}

In the setting of no more than four columns and two rows, results from the field of representation theory of quivers show that there exists a finite set of building blocks (indecomposable modules) from which every persistence module can be built (via direct sums, and up to isomorphism). Based on this, one can associate a well-defined barcode-like structure to such a module by counting the multiplicity of every summand in the decomposition. The inclusion of such grids into topological data analysis was inspired by a problem in materials science \cite{escolar2016persistence}. For five or more columns the theory becomes increasingly complex. In particular, for six or more columns there is no way to parametrize a set of building blocks in any reasonable way\footnote{The underlying graph, called a quiver, is known to be of  \emph{wild} representation type.}. This is  a major obstacle to the development of the theory of multi-parameter persistence.

A natural question to consider then is whether one can endow multi-parameter persistence modules with additional structure in order to enforce nice decomposition theorems akin to that of single-parameter persistence. One such setting coming from computational topology was identified in \cite{bendich2013homology,carlsson2009zigzag}, and further generalized in \cite{Cochoy2016}, where it is shown that the so-called \emph{strongly exact} 2-parameter persistence modules indexed over $\R^2$ are determined (up to isomorphism) by a multiset of particularly simple planar rectangular regions called {\em blocks}. Basically, a block is either an upper-right or lower-left quadrant, or a horizontal or vertical infinite band. The great advantage of this condition is that it can be checked locally: a 2-parameter persistence module (called a {\em bimodule} for short) is block-decomposable if, and only if, its restriction to any square as in Example~\ref{ex:square} is block-decomposable.  

\medskip

\noindent{\bf Contributions.} In this paper we address two important follow-up questions:
\begin{itemize}
\item Can we work out conditions such as above for larger classes of bimodules?
\item Can we identify classes of bimodules for which the rank invariant is complete?
\end{itemize}
Our answers to both questions are positive, and the two classes of bimodules  turn out to be the same, namely that of {\em rectangle-decomposable} bimodules, which by definition are determined (up to isomorphism) by a multiset of \emph{rectangles}, i.e subsets $R$ of the form $R = I \times J \subseteq \R^2$ where $I$ and $J$ are intervals in $\R$. Specifically, a bimodule is rectangle-decomposable if it decomposes into a direct sum of  \emph{rectangle modules}, i.e. indicator modules of rectangles.

Our local condition for rectangle decomposability, called \emph{weak exactness}, is a weaker version of the condition for block decomposability, in that it allows all types of rectangular shapes in the local squares' decompositions, as opposed to just blocks. More precisely, calling $\mathcal{R}$ the following subset of $\mathcal{I}$:
\begin{equation}
  \label{eq:def-rec-square}
\mathcal{R}=\{\{a\},\{b\},\{c\},\{d\},\{a,b\},\{a,c\},\{b,d\},\{c,d\},\{a,b,c,d\}\},
\end{equation}
\begin{definition}[Weak exactness]\label{def:weak-exactness-geom}
Given subsets $X,Y$ of~$\R$, a persistence module $M\colon X\times Y\subseteq \R\times \R \to \cat{vec}$ is \emph{weakly exact}
%(over $U$)
if the barcode of the following square
\begin{equation}\label{eq:weak-exactness}
\squarediag{\basemod_s}{\basemod_{(t_x,s_y)}}{\basemod_{(s_x,t_y)}}{\basemod_t}{\mor_s^{(t_x,s_y)}}{\mor_s^{(s_x,t_y)}}{\mor_{(s_x,t_y)}^t}{\mor_{(t_x,s_y)}^t}
\end{equation}
consists of elements from $\mathcal{R}$ for all indices $s\leq t$ in $X\times Y$. 
\end{definition}
By comparison, the strong exactness condition replaces $\mathcal{R}$ by $\mathcal{B} = \mathcal{R} \setminus \{\{b\}, \{c\}\}$.

\begin{example}
The persistence module to the left below is strongly exact, while the one to the right is only weakly exact, and the persistence modules in \Cref{ex:3-2-indecomposable} are not even weakly exact (each time the weak or strong exactness condition fails on the outermost rectangle): 
\[  \begin{tikzcd}[arrows=-stealth, ampersand replacement=\&, column sep=4em, row sep=3em]
\field\rar{\mymatrix{1\\0}} \& \field^2\rar{\mymatrix{1& 0\\0& 1}} \& \field^2
\& \&
\field\rar{\mymatrix{1\\0}} \& \field^2\rar{\mymatrix{0& 0\\0& 1}} \& \field^2
\\
0\rar{0}\uar{0} \& \field \uar{\mymatrix{0\\1}}\rar{1} \& \field \uar{\mymatrix{0\\1}}
\& \&
0\rar{0}\uar{0} \& \field \uar{\mymatrix{0\\1}}\rar{1} \& \field \uar{\mymatrix{0\\1}}
\end{tikzcd} \]
\end{example}

Our analysis focuses on the case of modules indexed over finite grids\footnote{A \emph{finite grid} is the product of two finite subsets of $\R$. Note that any finite grid can be identified with a grid of the form $\libr 1, n\ribr \times \libr 1, m\ribr$ for appropriate choices of $n,m\in\N$.}, the more general cases are left as future work. Our contributions summarize as follows:
\begin{itemize}
\item In Section~\ref{sec:rank_invariant_completeness} we prove that the rank invariant is complete on the class of rectangle-decomposable bimodules (Theorem~\ref{thm:rank_invariant_complete}). To this end, we generalize the inclusion-exclusion formula~\eqref{eq:incl_excl_1d} to our setting.
%Our formula can in fact be applied to general persistence bimodules, as such it can be seen as a novel invariant generalizing the persistence measure.
Note that our result also follows indirectly from an inclusion-exclusion formula for a generalization of the rank invariant for interval-decomposable modules~\cite[Prop. 7.13]{kim2018generalized}, but that we provide an explicit statement together with a simple and direct proof.
\item In Section~\ref{sec:rank_invariant_computation} we show that the rank invariant of a simplicial bifiltration with a total of $n$ simplices can be computed in $O(n^4)$ time (Theorem~\ref{thm:rank_invariant_computation}). This result in itself is not new, however, combined with our inclusion-exclusion formula, it yields an $O(n^4)$ time algorithm for computing the barcode of a persistence bimodule that is known to be rectangle-decomposable (Corollary~\ref{cor:rectangle_decomposition_computation}). This is an improvement over merely applying some state-of-the-art algorithm for computing decompositions of general 2-parameter persistence modules, which would take $O(n^{2\omega+1})$ time where $2\leq \omega < 2.373$ is the exponent for matrix multiplication~\cite{dey2019generalized}.
\item In Section~\ref{sec:local_condition} we propose an algebraic formulation   of our weak exactness condition (\Cref{def:weak-exactness-alg}). This formulation  turns out to be equivalent to \Cref{def:weak-exactness-geom} and to global rectangle-decomposability (the central mathematical result in the paper), specifically:
\begin{theorem}
\label{main:rec-dec}
Let $M$ be a pfd persistence module indexed over $X\times Y$, where $X,Y$ are finite subsets of $\R$. Then, $M$ is rectangle-decomposable if and only if $M$ is weakly exact. 
\end{theorem}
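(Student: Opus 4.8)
The plan is to prove both implications. The easy direction is that rectangle-decomposability implies weak exactness: if $M \cong \bigoplus_\alpha \indimod[R_\alpha]$ with each $R_\alpha$ a rectangle, then restricting to any square $\squarepos{s}{t}$ commutes with direct sums, so it suffices to check that the restriction of a single rectangle module $\indimod[R]$ to any square has barcode in $\mathcal{R}$. Intersecting a rectangle $R = I \times J$ with the corners of a square gives one of nine possible "rectangular" patterns, and a direct computation shows the barcode of each such restriction lies in $\mathcal{R}$ (the forbidden patterns $\{b\}$ and $\{c\}$ alone never arise, but those are allowed in $\mathcal{R}$); the point is precisely that no pattern of type $\{a,c,d\}$, $\{a,b,d\}$, $\{a,b,c\}$, or $\{b,c,d\}$ occurs, since those are not products of intervals. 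This mirrors the analogous argument for block-decomposability and should be quick.

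The substantive direction is that weak exactness implies rectangle-decomposability. I would argue by induction on the size of the grid $X \times Y \cong \integint{n} \times \integint{m}$, peeling off one row or one column at a time. Concretely, suppose the result holds for all smaller grids; consider $M$ on $\integint{n} \times \integint{m}$ and let $M'$ be its restriction to $\integint{n} \times \integint{m-1}$ (the bottom $m-1$ rows) and let $M''$ be the restriction to the top row $\integint{n} \times \{m\}$, a single-parameter module that is automatically interval-decomposable. By weak exactness of $M$, the restriction of $M$ to $\integint{n} \times \integint{m-1}$ is still weakly exact, so by induction $M' \cong \bigoplus_\alpha \indimod[R_\alpha]$ is rectangle-decomposable, and likewise $M'' \cong \bigoplus_\beta \indimod[I_\beta]$ with the $I_\beta$ intervals in $\integint{n}$. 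The task is then to assemble these two decompositions into a global rectangle decomposition of $M$, using the vertical transition maps $\mor^{(\cdot,m)}_{(\cdot,m-1)}$ between row $m-1$ and row $m$.

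The key step — and the main obstacle — is to show that, after a suitable change of basis realizing the decompositions of $M'$ and $M''$, every vertical transition map between a summand $\indimod[R_\alpha]$ (restricted to row $m-1$) and a summand $\indimod[I_\beta]$ (row $m$) is either zero or, when nonzero, "coherent" in the sense that the two summands glue to a single rectangle $\indimod[R_\alpha \cup (I_\beta \times \{m\})]$ which is again a rectangle (forcing $I_\beta$ to coincide with the top edge of $R_\alpha$ along the overlap). This is where weak exactness is used crucially: applying the weak-exactness square to indices straddling rows $m-1$ and $m$ constrains the rank of the composite maps and forbids the "staircase" gluings that would produce a non-rectangular interval summand such as $\{a,b,c\}$ in Example~\ref{ex:3-2-indecomposable}. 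I expect the delicate part to be a careful local analysis: one must track how a basis element of a summand in row $m-1$ maps upward, rule out partial overlaps between the vertical image and a second summand in row $m$, and then invoke a matrix-reduction / Gaussian-elimination argument over the indexing poset to simultaneously diagonalize all the vertical maps against the chosen bases — the standard subtlety being that changes of basis in one row must be propagated consistently through the already-fixed horizontal maps without destroying the rectangle structure of $M'$. Once all vertical maps are in this block-diagonal "glue or kill" form, $M$ visibly decomposes as a direct sum of rectangle modules, completing the induction.

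Finally, to obtain the full statement including the equivalence with the algebraic formulation \Cref{def:weak-exactness-alg}, I would separately check \Cref{def:weak-exactness-geom} $\Leftrightarrow$ \Cref{def:weak-exactness-alg} by unwinding both into the same rank conditions on the four composable maps of square~\eqref{eq:weak-exactness} — the geometric condition says the barcode avoids the four "three-corner" intervals, and a short linear-algebra computation translates "barcode in $\mathcal{R}$" into the vanishing/surjectivity statements of the algebraic condition — and then the theorem follows by combining this with the two implications above.
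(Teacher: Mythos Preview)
Your outline differs substantially from the paper's proof, and the step you correctly flag as ``the main obstacle'' is a genuine gap you have not closed. You propose to build the decomposition row by row: restrict to the bottom $m-1$ rows to get a rectangle-decomposable $M'$ (induction), restrict to the top row to get an interval-decomposable $M''$, then diagonalize the vertical transition $M'|_{\text{row }m-1}\to M''$ by a simultaneous change of basis. The subtlety you name---that basis changes in row $m-1$ must extend to automorphisms of all of $M'$---is precisely where such arguments break. Not every automorphism of $M'|_{\text{row }m-1}$ lifts: if $R_\alpha,R_\beta$ are rectangle summands of $M'$ both touching row $m-1$ but with $\Hom(\indimod[R_\alpha],\indimod[R_\beta])=0$ (e.g.\ horizontally incomparable supports), then no shear between their top edges extends to $M'$. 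You assert that weak exactness ``forbids the staircase gluings,'' but a rank constraint on $2\times 2$ squares does not by itself manufacture the global automorphism of $M'$ you need; you would have to exhibit, for each problematic off-diagonal block of the transition map, a specific endomorphism of $M'$ (not merely of its top row) that cancels it, and explain why weak exactness guarantees such an endomorphism exists. As written, the proposal locates the hard step and then defers it.

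The paper avoids this difficulty altogether. It first reduces to the case where $M$ is indecomposable (weak exactness passes to summands) and assumes for contradiction that $M$ is not a rectangle module. The engine is \Cref{lem:dquad-injective}: indicator modules of lower-left rectangles $\indimod[\libr 1,k\ribr\times\libr 1,l\ribr]$ are injective objects, so any such submodule of $M$ is automatically a direct summand. Combining this with the induction hypothesis applied to restrictions of $M$ to strict subgrids, a short chain of lemmas forces $M_{(1,j)}=0$ for every $j$; hence $M$ is supported on $\libr 2,n\ribr\times\libr 1,m\ribr$ and induction yields the contradiction. No transition map is ever diagonalized and no compatible bases are tracked---injectivity does the splitting for free.
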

\item In Section~\ref{sec:check} we leverage this result to derive an $O(n^{2+\omega})$-time algorithm for checking the rectangle-decomposability of persistence bimodules induced in homology from simplicial bifiltrations with at most $n$ simplices (Theorem~\ref{thm:check_weak_exactness}). Once again, this is an improvement over applying some state-of-the-art algorithm for computing decompositions of general 2-parameter persistence modules and then checking the summands one by one.
\item In Section~\ref{sec:counter-examples} we investigate the existence of similar local characterizations for larger classes of interval-decomposable modules. First, we show that interval-decomposability itself cannot be characterized locally, even when testing on arbitrary strict subgrids and not just squares.  Second, we show that decomposability with respect to certain classes of indecomposables in-between the rectangle modules and the interval modules cannot be characterized locally either when testing on squares.
\item Finally, in Section~\ref{sec:example} we show how rectangle-decomposable modules arise from (sufficiently tame) real-valued functions on a topological space. This is then used to give a new proof of the \emph{pyramid basis theorem} of \cite{bendich2013homology}.
\end{itemize}
%

%%%%%%%%%%%%%%%%%%
\section{Completeness of the rank invariant}
\label{sec:rank_invariant_completeness}

Suppose in this section that $X, Y$ are subsets of~$\Z$.
\begin{theorem}\label{thm:rank_invariant_complete}
  The isomorphism type of any pfd rectangle-decomposable persistence module~$\basemod$ over~$X\times Y$ is fully determined by the rank invariant of~$\basemod$.
\end{theorem}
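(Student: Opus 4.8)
The plan is to mimic the single-parameter argument by producing an explicit inclusion-exclusion formula that recovers, from the rank invariant alone, the multiplicity in the decomposition of every rectangle module $\indimod[R]$ with $R = I\times J$. Since $\basemod$ is pfd and rectangle-decomposable, we may write $\basemod \cong \bigoplus_{R} \indimod[R]^{\oplus m(R)}$ over all rectangles $R\subseteq X\times Y$, and the number $m(R)$ is well-defined by the Krull--Schmidt/Azumaya theorem (already invoked in the excerpt). The rank invariant is additive over direct sums, so it suffices to understand $\rank(\indimod[R])(s\le t)$ for a single rectangle: it equals $1$ if both $s$ and $t$ lie in $R$, and $0$ otherwise. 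Hence $\rank_\basemod(s,t) = \#\{R : s\in R,\ t\in R\}$ counted with multiplicity $m(R)$, i.e. $\rank_\basemod(s,t) = \sum_{R\ni s,\, R\ni t} m(R)$.

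First I would fix $s = (s_x,s_y) \le t = (t_x,t_y)$ and observe that a rectangle $R = I\times J$ contains both $s$ and $t$ iff $I$ contains the interval $[s_x,t_x]\cap X$ and $J$ contains $[s_y,t_y]\cap Y$; since $X,Y\subseteq\Z$ are (we may assume, after the identification in the footnote) of the form $\integint{n}\times\integint{m}$, such an $R$ is exactly determined by a ``left margin'' $\le s_x$, a ``right margin'' $\ge t_x$, a ``bottom margin'' $\le s_y$ and a ``top margin'' $\ge t_y$. So $\rank_\basemod(s,t)$ is a sum of $m(R)$ over a ``box'' of rectangles in the 4-dimensional parameter space indexing rectangles by their four sides. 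The second step is then to invert this: by a four-fold telescoping (a discrete mixed finite difference in the four margin-variables, exactly generalizing the two-fold alternating sum in \eqref{eq:incl_excl_1d}), one isolates a single $m(R)$. Concretely, for a rectangle $R$ with corners, taking $s$ to be the bottom-left corner of $R$ and $t$ its top-right corner, the multiplicity $m(R)$ is the alternating sum of $\rank_\basemod$ evaluated at the $2^4 = 16$ pairs obtained by independently nudging each of the four sides of $R$ inward or outward by one grid step (with the usual convention that $\rank = 0$ when an index falls outside the grid). I would write this formula out carefully, checking signs by the principle that each $\indimod[R']$ contributes $\prod (\text{telescoping factors})$, which is $1$ exactly when $R' = R$ and $0$ otherwise.

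The main obstacle, and the step deserving the most care, is the bookkeeping of the four-dimensional inclusion-exclusion: one must verify that the alternating sum of the sixteen rank values annihilates the contribution of every rectangle $R' \ne R$, including the degenerate cases where $R$ touches the boundary of the grid (so some of the sixteen shifted indices are undefined and the corresponding rank is set to $0$) and the cases where $R'$ shares some but not all sides with $R$. This is the analogue of checking that \eqref{eq:incl_excl_1d} correctly handles intervals abutting the ends of $\integint{n}$, but now in two dimensions simultaneously, so the edge/corner case analysis is more involved. Once the formula is established and shown to depend only on the rank invariant, completeness follows immediately: two rectangle-decomposable modules with equal rank invariants have equal multiplicity functions $m(\cdot)$, hence are isomorphic. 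I would also remark, as the authors do, that this recovers \eqref{eq:incl_excl_1d} when $Y$ is a single point, and that it is an explicit incarnation of the more general formula of \cite[Prop.~7.13]{kim2018generalized}.
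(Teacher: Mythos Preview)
Your proposal is correct and follows essentially the same approach as the paper: both observe that $\rank(s,t)$ counts (with multiplicity) the rectangle summands containing both $s$ and $t$, then recover $m(R)$ via a four-fold inclusion-exclusion yielding the same sixteen-term alternating sum of ranks. The only cosmetic difference is that the paper derives the formula in two two-fold steps (first isolating the lower-left corner via~\eqref{incl-excl_m+}, then the upper-right corner via~\eqref{eq:incl-excl_m}), whereas you describe it as a single four-dimensional finite difference.
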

The proof consists in showing that the multiplicity~$m(s,t)$ of each individual rectangle module~$\field_{\libr s_x, t_x\ribr\times\libr s_y, t_y\ribr}$  in the decomposition of~$\basemod$ is given by the inclusion-exclusion formula~\eqref{eq:incl-excl_rank} below, which involves only the rank invariant~$\rank: (X\times Y)^2\to\N$ of~$\basemod$ --- with the convention that $\rank(s,t)=0$ whenever $s\nleq t$. This formula is the analogue, in the category of rectangle-decomposable pfd bimodules, of the inclusion-exclusion formula~\eqref{eq:incl_excl_1d} for counting the multiplicities of interval summands in one-parameter persistence.
%---also known as the {\em persistence measure}~\cite{chazal2016structure,cohen2007stability}.

Fix arbitrary indices $s\leq t\in X\times Y$. Recall that the rank of~$(A\oplus B)(s\leq t)$ is equal to the sum of the ranks of~$A(s\leq t)$ and~$B(s\leq t)$. Meanwhile, for any summand~$\field_R$ of~$\basemod$, the rank of~$\field_R(s\leq t)$ is~$1$ if $s,t\in R$ and $0$ otherwise. Therefore, $\rank(s, t)$ counts (with multiplicity) the number of summands of~$\basemod$ whose rectangle support  contains both~$s$ and~$t$. Then, denoting by $m(s, t^+)$ the number of (rectangle) summands whose support contains~$t$ and has~$s$ as lower-left corner, we have the folllowing inclusion-exclusion formula:
\begin{equation}\label{incl-excl_m+}
m(s, t^+) = \rank(s, t) - \rank((s_x-1, s_y), t) - \rank((s_x, s_y-1), t) + \rank((s_x-1, s_y-1), t).
\end{equation}
This formula can be interpreted as follows: a rectangle containing~$t$ has $s$ as lower-left corner if and only if it contains $s$ but neither $(s_x-1, s_y)$ nor $(s_x, s_y-1)$; and it contains both $(s_x-1, s_y)$ and $(s_x, s_y-1)$ if and only if it contains $(s_x-1, s_y-1)$.

Using the same approach at~$t$, we can now compute the number~$m(s,t)$ of summands of~$\basemod$ whose support has $s$ as lower-left corner and~$t$ as upper-right corner (i.e. is the rectangle~$\libr s_x, t_x\ribr\times\libr s_y, t_y\ribr$). The corresponding inclusion-exclusion formula is:
\begin{equation}\label{eq:incl-excl_m}
m(s, t) = m(s, t^+) - m(s, (t_x+1, t_y)^+) - m(s, (t_x, t_y+1)^+) + m(s, (t_x+1, t_y+1)^+).
\end{equation}
Combining~\eqref{incl-excl_m+} and~\eqref{eq:incl-excl_m} together gives the  desired inclusion-exclusion formula for the multiplicity~$m(s,t)$ of the summand~$\field_{\libr s_x, t_x\ribr\times\libr s_y, t_y\ribr}$ in the decomposition of~$\basemod$ from the rank invariant, hence completing the proof of Theorem~\ref{thm:rank_invariant_complete}, namely:  
\begin{equation}\label{eq:incl-excl_rank}
  \begin{gathered}
    \begin{array}{rcl}
      m(s, t) & = & \rank(s, t) - \rank((s_x-1, s_y), t) \\[0.5em]
      &&\hspace{10pt} - \rank((s_x, s_y-1), t) + \rank((s_x-1, s_y-1), t) \\[0.5em]
      && - \rank(s, (t_x+1, t_y)) + \rank((s_x-1, s_y), (t_x+1, t_y))\\[0.5em]
      &&\hspace{10pt} + \rank((s_x, s_y-1), (t_x+1, t_y)) - \rank((s_x-1, s_y-1), (t_x+1, t_y)) \\[0.5em]
      && - \rank(s, (t_x, t_y+1)) + \rank((s_x-1, s_y), (t_x, t_y+1))\\[0.5em]
      &&\hspace{10pt} + \rank((s_x, s_y-1), (t_x, t_y+1)) - \rank((s_x-1, s_y-1), (t_x, t_y+1)) \\[0.5em]
      && + \rank(s, (t_x+1, t_y+1)) - \rank((s_x-1, s_y), (t_x+1, t_y+1))\\[0.5em]
      &&\hspace{10pt} - \rank((s_x, s_y-1), (t_x+1, t_y+1)) + \rank((s_x-1, s_y-1), (t_x+1, t_y+1)).
    \end{array}
  \end{gathered}
\end{equation}

%%%%%%%%%%%%%%%%%%
\section{Computing the rank invariant and rectangle decompositions}
\label{sec:rank_invariant_computation}
Let $\filt$ be a simplicial bifiltration with $n$ simplices in total. Assume without loss of generality that~$\filt$ is indexed over the grid $\grid=\libr 1, n\ribr \times \libr 1, n\ribr$, for any larger indexing grid must contain arrows with identity maps that can be pre- or post-composed, and any smaller grid can be enlarged by inserting arrows with identity maps. Assume further that each arrow $\filt_{(i,j)} \to \filt_{(i+1,j)}$ or $\filt_{(i,j)} \to \filt_{(i,j+1)}$ is either an identity map or the insertion of a single simplex. 
%Assume further that $\filt$ is {\em $1$-critical}, meaning that the set $\{t\in\grid \mid \sigma \in \filt_t\}$ has a unique minimal element for any simplex $\sigma$ entering the filtration. 
We also fix a homology degree~$p$.
\begin{theorem}\label{thm:rank_invariant_computation}
  Given the above input, the rank invariant of the persistence bimodule~$M$ induced by~$\filt$ in $p$-th homology can be computed in $O(n^4)$~time.
\end{theorem}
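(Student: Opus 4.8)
The plan is to reduce the computation of the rank invariant $r(s,t) = \operatorname{rank}(M(s\le t))$ over all pairs $s\le t$ in the $n\times n$ grid to a controlled number of single-parameter persistence computations along monotone paths, and then to charge the total cost carefully. The key observation is that for a fixed lower index $s$, the value $r(s,t)$ for $t$ ranging over the ``upper staircase'' $\{t : t \ge s\}$ is governed by restrictions of $M$ to one-dimensional sub-posets, but a naive iteration over all $s$ would cost $O(n^2)$ single-parameter computations of size up to $O(n)$, each taking matrix-multiplication time, which is too slow. Instead I would exploit the $1$-critical bifiltration structure: each simplex has a unique minimal birth index, so the bimodule $M$ is presented by an explicit boundary matrix whose columns are graded by points of the grid, and standard persistence reduction can be run \emph{incrementally} along the two axes.

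\textbf{Step 1: fix a ``sweep'' direction.} For each row index $y_0 \in \libr 1,n\ribr$, consider the horizontal line $L_{y_0} = \libr 1,n\ribr \times \{y_0\}$, which carries a single-parameter persistence module $M|_{L_{y_0}}$; its barcode can be computed in $O(n^\omega)$ time, or more cheaply since the total number of simplices is $n$, giving all ranks $\operatorname{rank}(M((x,y_0)\le(x',y_0)))$ for $x\le x'$. This already handles pairs $s\le t$ lying on a common horizontal line. By symmetry the same is done for vertical lines.

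\textbf{Step 2: handle general pairs via the zig-zag ``L-path''.} For a general pair $s = (s_x,s_y) \le t = (t_x,t_y)$, factor the structure map $M(s\le t)$ through the corner $c = (t_x, s_y)$: $M(s\le t) = M(c\le t)\circ M(s\le c)$. The rank of a composite is not determined by the ranks of the factors, so this factorization alone is insufficient; instead I would compute, for each fixed $s_y$ and each fixed $t_x$, the single-parameter module obtained by concatenating the horizontal segment $\{s_x \le \cdot \le t_x\}\times\{s_y\}$ with the vertical segment $\{t_x\}\times\{s_y\le\cdot\le t_y\}$ — an ``L-shaped'' path — and read off $\operatorname{rank}(M(s\le t))$ as a persistence pairing across its two endpoints. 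There are $O(n^2)$ choices of the pair $(s_y, t_x)$ (the ``elbow''), and for each the relevant one-parameter module along the L-path has at most $O(n)$ vertices and is supported on at most $n$ simplices; running persistence on it costs $O(n^\omega)$ (or, using the sparsity, one can argue $O(n^3)$ with plain Gaussian elimination). This yields total time $O(n^2)\cdot O(n^2) = O(n^4)$, matching the claimed bound; one must check that every pair $s\le t$ is covered by exactly one elbow $(s_y,t_x)$ and that the persistence pairing along the L-path indeed recovers $\operatorname{rank}(M(s\le t))$, which follows because a basis element surviving from the left endpoint to the right endpoint of the path is exactly a generator mapping nontrivially under $M(s\le t)$.

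\textbf{The main obstacle} I anticipate is bounding the per-elbow cost tightly enough: the L-path module, although indexed over $O(n)$ grid points, could a priori have large pointwise dimension, so one cannot treat it as a module with $O(n)$ ``bars.'' The resolution is to work at the chain level rather than the homology level — build the one-parameter \emph{filtered chain complex} along the L-path directly from the bifiltration $\filt$, whose total size is $O(n)$ simplices regardless of the ambient grid, and run the standard persistence algorithm on a boundary matrix of size $O(n)\times O(n)$ in $O(n^\omega)$ time (or $O(n^3)$ naively). Care is needed because a simplex born at a grid point off the L-path may still enter the restricted filtration when the path ``passes above and to the right'' of its birth index; tracking these correctly — i.e.\ defining the induced filtration on the L-path and verifying it is again $1$-critical with at most $n$ simplices — is the delicate bookkeeping step, after which the $O(n^4)$ total follows by summing $O(n^\omega) \le O(n^3)$ over the $O(n^2)$ elbows, with room to spare. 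Finally, combining this with the inclusion–exclusion formula~\eqref{eq:incl-excl_rank} (which is a fixed linear-combination evaluation at $O(n^2)\cdot O(n^2) = O(n^4)$ argument pairs) gives Corollary~\ref{cor:rectangle_decomposition_computation} at no extra asymptotic cost.
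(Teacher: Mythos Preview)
Your high-level idea is sound---for each ``elbow'' $(t_x,s_y)$ the restriction of $M$ to the L-shaped path through that elbow is a one-parameter module whose barcode recovers $\rank(s,t)$ for every $s=(s_x,s_y)\le t=(t_x,t_y)$ sharing that elbow---but the complexity accounting does not close. You state that the per-elbow persistence computation costs $O(n^\omega)$ (or $O(n^3)$ with plain reduction), then write ``$O(n^2)\cdot O(n^2)=O(n^4)$'' and later ``summing $O(n^\omega)\le O(n^3)$ over the $O(n^2)$ elbows, with room to spare''. But $O(n^2)\cdot O(n^\omega)=O(n^{2+\omega})$, which equals $O(n^4)$ only if $\omega=2$; with the known bound $\omega<2.373$ you get at best $\approx O(n^{4.37})$, and with plain Gaussian elimination $O(n^5)$. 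There is no room to spare: $O(n^{2+\omega})$ is precisely the bound the paper obtains for the \emph{harder} problem of checking weak exactness (Theorem~\ref{thm:check_weak_exactness}), not for computing the rank invariant. Nothing in your sketch amortizes the cost across elbows, and no sparsity argument brings a single persistence run on $n$ simplices below $O(n^\omega)$ in the worst case.

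The paper's route is entirely different and avoids repeated persistence computations. It first computes, in $O(n^3)$ time via the algorithm of~\cite{lesnick2019computing}, a free resolution $0\to M_\rrel\xrightarrow{\psi} M_\rel\xrightarrow{\phi} M_\gen\to M\to 0$ with each free module of rank at most~$n$. The rank invariant is then an alternating count: $\rank(s,t)$ equals the number of generators $\gen_i$ with $\grade(\gen_i)\le s$, minus the number of relations $\rel_j$ with $\lub(\rel_j)\le s$ and $\grade(\rel_j)\le t$, plus the analogous count of syzygies $\rrel_r$. Each of these three terms is tabulated over all $O(n^4)$ pairs $(s,t)$ by two-dimensional prefix sums (dynamic programming), in $O(n^4)$ time total. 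After the resolution step no further linear algebra is required; that is what buys the improvement from $O(n^{2+\omega})$ to $O(n^4)$.
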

A proof of this result can be found in D. Morozov's Ph.D. thesis~\cite[Section~4.4.2]{morozov2008homological}. We reproduce it below for completeness, with a slight adaptation that allows us to avoid assuming that $\filt$ is 1-critical\footnote{Let us also point out that the algorithm given in the conference version of this paper~\cite{botnan_et_al:LIPIcs:2020:12180} was incorrect.}. Before giving the proof, let us mention that this theorem, combined with the inclusion-exclusion formula~\eqref{eq:incl-excl_rank}, gives an $O(n^4)$-time algorithm to compute the barcode of~$\filt$ assuming that~$M$ is rectangle-decomposable: once the rank invariant of~$M$ has been computed, iterate over all the pairs~$(s,t)$ with $s\leq t\in \grid$ and, for each one of them, apply the formula in constant time to get the multiplicity of the rectangle module~$\field_{\libr s_x, t_x \ribr\times\libr s_y,t_y\ribr}$  in the decomposition of~$M$. Thus,
\begin{corollary}\label{cor:rectangle_decomposition_computation}
  Computing the decomposition of a rectangle-decomposable module over~$X\times Y$  induced in homology by a bifiltration with $n$ simplices in total can be done in $O(n^4)$~time.
\end{corollary}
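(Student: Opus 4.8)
The plan is simply to combine the two preceding results. First I would invoke Theorem~\ref{thm:rank_invariant_computation} to compute, in $O(n^4)$ time, the rank invariant $\rank$ of the bimodule $M$ induced by $\filt$ in $p$-th homology. Up to the harmless reindexing discussed at the start of the section (any finite grid $X\times Y$ being identifiable with $\libr 1,n'\ribr\times\libr 1,m'\ribr$, and such a grid being enlargeable to $\grid=\libr 1,n\ribr\times\libr 1,n\ribr$ by inserting identity arrows), we may assume $M$ is indexed over $\grid$. I would store all the values $\rank(s,t)$ for $s\leq t$ in $\grid$ in a lookup table, so that each subsequent query takes $O(1)$ time; since $\grid$ has $n^2$ vertices, this table has $O(n^4)$ entries and fits within the stated budget.

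Next, since $M$ is assumed rectangle-decomposable, Theorem~\ref{thm:rank_invariant_complete} guarantees that $M$ is determined up to isomorphism by $\rank$, and more precisely that the multiplicity $m(s,t)$ of the rectangle module $\field_{\libr s_x,t_x\ribr\times\libr s_y,t_y\ribr}$ in the decomposition of $M$ is given by the inclusion-exclusion formula~\eqref{eq:incl-excl_rank}. I would therefore iterate over all pairs $(s,t)$ with $s\leq t\in\grid$ — there are $O(n^4)$ of them — and, for each, read off the (at most $16$) rank values occurring in~\eqref{eq:incl-excl_rank} from the table and evaluate the alternating sum in $O(1)$ time. Indices of the form $(s_x-1,\cdot)$, $(\cdot,s_y-1)$, $(t_x+1,\cdot)$ or $(\cdot,t_y+1)$ that fall outside $\libr 1,n\ribr$ cause no difficulty: the module is zero there, so the corresponding ranks vanish, which is exactly the convention already used in deriving~\eqref{incl-excl_m+}, \eqref{eq:incl-excl_m} and~\eqref{eq:incl-excl_rank}. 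Collecting the pairs $(s,t)$ with $m(s,t)\neq 0$, together with their multiplicities, yields the barcode of $M$, and hence — via the rectangle modules it lists — the decomposition of $M$ itself.

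Summing costs gives $O(n^4)$ for the rank invariant plus $O(n^4)\cdot O(1)$ for the inclusion-exclusion sweep, i.e. $O(n^4)$ in total, which is the claimed bound.

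I do not expect a genuine mathematical obstacle here: the substantive content is entirely carried by Theorem~\ref{thm:rank_invariant_computation} and by the derivation of formula~\eqref{eq:incl-excl_rank}. The only points requiring minor care are (i) precomputing the rank table so that each of the $O(n^4)$ formula evaluations is genuinely $O(1)$ rather than incurring a fresh rank computation, and (ii) handling out-of-range indices in~\eqref{eq:incl-excl_rank} via the zero-padding convention above; both are routine.
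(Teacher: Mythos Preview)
Your proposal is correct and follows essentially the same approach as the paper: compute the rank invariant in $O(n^4)$ time via Theorem~\ref{thm:rank_invariant_computation}, then iterate over the $O(n^4)$ pairs $(s,t)$ and apply formula~\eqref{eq:incl-excl_rank} in constant time each. The paper's argument is terser (a single sentence before the corollary), but the content is identical, including the constant-time evaluation of the inclusion-exclusion formula once ranks are tabulated.
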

This complexity compares favorably to that of the currently best known algorithm for computing direct-sum decompositions of general persistence bimodules\footnote{Let us also point out that our approach does not suffer from the limitation of the algorithm of~\cite{dey2019generalized}, which is that no two generators or relations in a minimal presentation of~$M$ can  have the same grade.}, which is~$O(n^{2\omega+1})$ where $2\leq \omega < 2.373$ is the exponent for matrix multiplication~\cite{dey2019generalized}.

\begin{figure}[t]
\centering
\includegraphics[width=\textwidth]{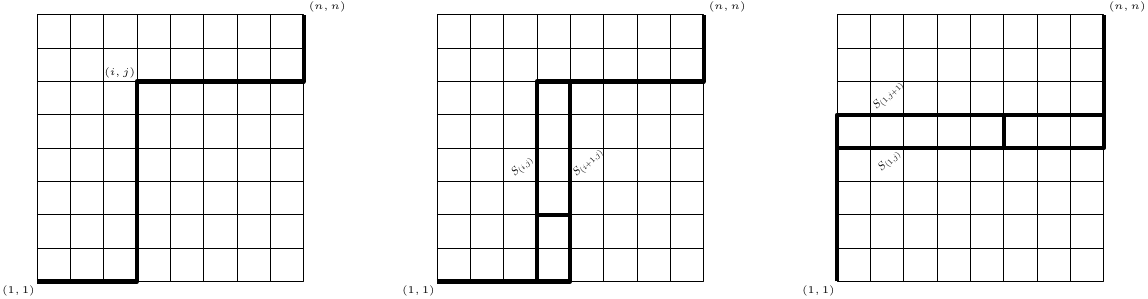}
\caption{Left: the stair~$S_{(i,j)}$. Center: transitioning from $S_{(i,j)}$ to $S_{(i+1,j)}$ via a sequence of $O(n)$ intermediate paths with 2 steps each. Right: transitioning from $S_{(1,j)}$ to $S_{(1,j+1)}$.}
\label{fig:stairs}
\end{figure}

Let us now provide the algorithm for Theorem~\ref{thm:rank_invariant_computation}. First, we provide a simplified algorithm that runs in  $O(n^{2+\omega})$~time. Consider all the paths of the form $(1,1) \to \cdots \to (i,1) \to \cdots \to (i,j) \to \cdots \to (n,j) \to \cdots \to (n,n)$ in the $n\times n$ grid, where $(i,j)\in\libr 1,n\ribr^2$ is arbitrary, as illustrated in Figure~\ref{fig:stairs}~(left). We call such a path a {\em stair}, denoted by $S_{(i,j)}$, and we call the corresponding index~$(i,j)$ its {\em nosing}. Note that all the stairs whose nosing is of the form $(i,1)$ or $(n,j)$ are in fact identical to the path $(1,1)\to \cdots \to (n,1) \to \cdots \to (n,n)$, while all the other stairs with different nosings are different.  The key observation is that, for any pair of comparable indices $(i,j)\leq (i',j')$, the stair with nosing $(i,j')$ passes through both indices. Computing the rank invariant can then be done by iterating over all the stairs, for instance in lexicographical order of the coordinates of their respective nosings, and for each such stair~$S_{(i,j)}$, by computing the persistence barcode of the 1-parameter restriction~$F|_{S_{(i,j)}}$ and then using this barcode to report the ranks between all the grid indices encountered along the path. This takes $O(n^\omega)$  per stair, using the 1-parameter persistence algorithm based on fast matrix multiplication~\cite{milosavljevic2011zigzag}, and as there are $O(n^2)$ stairs in total, the overall running time of the algorithm is $O(n^{2+\omega})$.

In order to reduce the overall complexity to~$O(n^4)$, we exploit the additional observation that, to transition between two consecutive stairs in lexicographical order of the coordinates of their nosings, say $S_{(i,j)}$ and $S_{(i+1,j)}$, one can go through a sequence of $O(n)$ intermediate paths of the form $(1,1) \to \cdots \to (i,1) \to \cdots \to (i,k) \to (i+1,k) \to \cdots \to (i+1, j) \to \cdots \to (n,j) \to \cdots \to (n,n)$, where $k$ ranges from~$2$ to~$j-1$, as illustrated in Figure~\ref{fig:stairs}~(center). Any two consecutive paths in this sequence differ only at a single cell of the grid $\libr 1,n\ribr^2$, therefore the restrictions of~$F$ to these two paths either do not differ, or differ by one simplex being inserted one step earlier or later, or by two consecutive simplex insertions being exchanged. In any situation, the persistence barcode can be updated in $O(n)$~time using the vineyards algorithm~\cite{cohen2006vines}. The update of the barcode from $S_{(i,j)}$ to $S_{(i+1,j)}$ then takes $O(n^2)$ time. Likewise, we can compute the barcode of $F|_{S_{(1,j+1)}}$ by transitioning from $S_{(1,j)}$ via an intermediate sequence of $O(n)$~paths differing at a single cell in the grid each time, as illustrated in Figure~\ref{fig:stairs}~(right). Thus, the barcode of $F|_{S_{(1,j+1)}}$ is also obtained in $O(n^2)$~time, and so the overall running time of the algorithm is reduced to~$O(n^4)$. This concludes the proof of Theorem~\ref{thm:rank_invariant_computation}.

\section{Algebraic formulation of weak exactness}
\label{sec:local_condition}
As shown in~\cite{botnan2018decomposition,Cochoy2016}, a persistence module $M\colon X\times Y\subseteq \R\times \R \to \cat{vec}$ is strongly exact if, and only if, the following sequence induced by~\eqref{eq:weak-exactness} is exact for all indices $s\leq t \in X\times Y$:
\begin{equation}\label{eq:exact-seq}
\xymatrix{M_s \ar^-{(\mor_s^{(t_x, s_y)}, \rho_s^{(s_x, t_y)})}[rrr] &&& M_{(t_x, s_y)} \oplus M_{(s_x, t_y)} \ar^-{\mor_{(t_x, s_y)}^t - \mor_{(s_x, t_y)}^t}[rrr] &&& M_t.
}\end{equation}
Similarly, we can characterize weak exactness (\Cref{def:weak-exactness-geom}) algebraically:
\begin{definition}[Algebraic weak exactness]\label{def:weak-exactness-alg}
A persistence module $M\colon X\times Y\subseteq \R\times \R \to \cat{vec}$  is called \emph{algebraically weakly exact} if  the following equalities hold for all $s\leq t \in X\times Y$:
\begin{align*}
    \Ima\mor_s^t &= \Ima\mor_{(t_x,s_y)}^t \cap \Ima\mor_{(s_x,t_y)}^t, \\
    \Ker\mor_s^t &= \Ker\mor_s^{(t_x,s_y)} + \Ker\mor_s^{(s_x,t_y)}.
\end{align*}
%with $a = (t_x,s_y)$ and $b = (s_x,t_y)$.
\end{definition}
This condition holds in particular when the sequence~\eqref{eq:exact-seq} is exact, but not only. Indeed, as can be checked easily,
%it is straightforward to check that
any rectangle (not just block) module is algebraically weakly exact. So is any rectangle-decomposable pfd persistence bimodule, since the property is obviously preserved under taking direct sums of pfd persistence bimodules. The converse holds as well:
\begin{theorem}[Decomposition of algebraically weakly exact pfd bimodules]
\label{thm:weak-exact-rec-dec}
For any algebraically weakly exact pfd persistence module $\basemod$ over a finite grid $(\poset,\leq)$, there is a unique multiset $\rectangles{\basemod}$ of rectangles of $\poset$ such that:
\begin{equation*}
    \basemod \cong \bigoplus_{R \in \rectangles{\basemod}} \field_R.
\end{equation*}
\end{theorem}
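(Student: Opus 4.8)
\emph{Uniqueness} is automatic: pfd persistence modules over a finite poset satisfy the Krull--Remak--Schmidt property \cite{Azumaya1950}, so once existence is established the multiset $\rectangles{\basemod}$ of rectangle summands is well defined. For \emph{existence}, normalize the grid to $X=\libr 1,n\ribr$, $Y=\libr 1,m\ribr$ and argue by induction on the number of columns~$n$. The base case $n=1$ is the one-parameter structure theorem: $M$ is then a pfd module over the finite totally ordered set $Y$, hence a direct sum of interval modules, and the intervals of $\{1\}\times Y$ are exactly its rectangles.

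For the inductive step, split off the last column: put $M' := M|_{\libr 1,n-1\ribr\times Y}$ and $C := M|_{\{n\}\times Y}$. Giving a functor on $X\times Y$ is the same as giving a triple $(M',C,\mor)$ where $\mor\colon M'|_{\{n-1\}\times Y}\to C$ is the morphism of $Y$-indexed modules encoding the horizontal transition maps from column $n-1$ to column $n$; this correspondence is an equivalence of categories compatible with direct sums. The equalities of \Cref{def:weak-exactness-alg} at indices $s\le t$ lying in a subgrid involve only the module at $s$, $t$, $(t_x,s_y)$ and $(s_x,t_y)$, which all lie in that subgrid, so algebraic weak exactness is inherited by every restriction of $M$, in particular by $M'$. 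By the inductive hypothesis $M'\cong\bigoplus_{a}\field_{R'_a}$ for rectangles $R'_a=I'_a\times J'_a$, and by the one-parameter theorem $C\cong\bigoplus_b\field_{J_b}$ for intervals $J_b\subseteq Y$. Restricting $M'$ to column $n-1$ kills every summand with $\max I'_a<n-1$; writing $A$ for the indices of the surviving summands, the interface module is $M'|_{\{n-1\}\times Y}\cong\bigoplus_{a\in A}\field_{J'_a}$.

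The proof thus reduces to a \emph{normal-form statement for the connecting morphism}: using automorphisms of $M'$ that preserve its rectangular decomposition --- rescalings of summands and additions of nonzero maps $\field_{R'_{a'}}\to\field_{R'_a}$ between summands (each such Hom-space being at most one-dimensional) --- together with automorphisms of $C$ preserving its interval decomposition, one brings $\mor\colon\bigoplus_{a\in A}\field_{J'_a}\to\bigoplus_b\field_{J_b}$ into block-diagonal form whose blocks are of three kinds only: a zero map $\field_{J'_a}\to 0$ (the rectangle $R'_a$ is not extended to column $n$), a zero map $0\to\field_{J_b}$ (the interval $J_b$ is born in column $n$), or an isomorphism $\field_{J'_a}\xrightarrow{\sim}\field_{J_b}$ with $J'_a=J_b$ (the rectangle $R'_a$ grows by the single column $n$). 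Granting this, transporting the resulting decomposition of the triple $(M',C,\mor)$ back through the equivalence exhibits $M$ as a direct sum of the rectangle modules $\field_{I'_a\times J'_a}$, $\field_{(I'_a\cup\{n\})\times J'_a}$ and $\field_{\{n\}\times J_b}$, closing the induction. (One could instead first reduce to $M$ indecomposable, since the subspace identities of \Cref{def:weak-exactness-alg} split along a direct sum and hence algebraic weak exactness passes to direct summands as well as to direct sums; but this does not ease the induction, because restricting an indecomposable to a subgrid need not preserve indecomposability.)

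The main obstacle is precisely this normal-form step, and it is the only point at which algebraic weak exactness of the \emph{whole} module $M$ --- not just of $M'$ --- is used. For $s=(s_x,s_y)$ with $s_x\le n-1$ and $t=(n,t_y)$, the image equality of \Cref{def:weak-exactness-alg} reads $\Ima\mor_s^t=\Ima\mor_{(n,s_y)}^{(n,t_y)}\cap\Ima\mor_{(s_x,t_y)}^{(n,t_y)}$: it presents the image of a structure map of $M$ landing in column $n$ as the intersection of a transition-image inside $C$ with the image obtained by pushing part of $M'$ through $\mor$ at row $t_y$. Since $M'$ is a sum of rectangles, the relevant images inside the interface $M'|_{\{n-1\}\times Y}$ are coordinate subspaces spanned by prescribed subsets of $\{\field_{J'_a}:a\in A\}$, so the whole family of these equalities rigidly pins down how $\Ima\mor$ meets each summand $\field_{J_b}$ of $C$, and forces every $J_b$ that is met to be matched with a summand $\field_{J'_a}$ with $J'_a=J_b$; this is exactly what rules out ``partial'' extensions that would create non-rectangular supports. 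Dually, the kernel equality applied to squares with lower-left corner in column $n-1$ controls $\Ker\mor$ and shows that the unmatched part of the interface splits off. Turning these subspace identities into the stated block-diagonalization is a matching argument at the one-parameter level --- morally the construction of a barcode basis for a morphism of one-parameter modules --- together with the bookkeeping needed to realize the matching through genuine automorphisms of $M'$, so that its decomposition remains rectangular throughout; I expect that bookkeeping to be the technical heart of the proof.
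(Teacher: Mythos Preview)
Your strategy is genuinely different from the paper's, and the difference matters. The paper does not attempt to normalize a connecting morphism between a subgrid and an extra column. Instead it first reduces to $M$ indecomposable (algebraic weak exactness passes to summands), then argues by contradiction through a chain of corner-vanishing lemmas: assuming $M$ is weakly exact, indecomposable, and not a rectangle module on $\libr 1,n\ribr\times\libr 1,m\ribr$, one shows successively that $\mor_{(1,1)}^{(n,m)}=0$, that $M_{(1,1)}=M_{(n,m)}=0$, that $M_{(1,m)}=0$, and finally that $M_{(1,j)}=0$ for all~$j$. This forces $M$ to live on the strictly smaller grid $\libr 2,n\ribr\times\libr 1,m\ribr$, where the induction hypothesis gives a rectangle decomposition, contradicting indecomposability. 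The induction is on the pair $(n,m)$, and the lemmas repeatedly exploit that indicator modules of lower sets are injective (so any submodule isomorphic to one splits off). Your parenthetical remark that reducing to indecomposables ``does not ease the induction'' is thus misleading: the paper does exactly this, using the induction hypothesis on restrictions (which need not stay indecomposable) only to extract structural information about the original indecomposable~$M$.

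Your own proposal leaves its central step unproven---you write that you ``expect that bookkeeping to be the technical heart of the proof''---and it is more than bookkeeping. The automorphisms of $M'$ that preserve its rectangle decomposition act on the interface $M'|_{\{n-1\}\times Y}$ only through a \emph{restricted} subgroup: among summands $\field_{I_a\times J}$ and $\field_{I_{a'}\times J}$ reaching column $n-1$ with the same vertical interval~$J$ but different left endpoints, $\Hom$ is nonzero in only one direction, so at the interface you may perform only triangular, not arbitrary, column operations among those copies of~$\field_J$. Your sketch of how the image identity of \Cref{def:weak-exactness-alg} constrains $\Ima\mor$ is suggestive, but it does not show that these triangular operations (together with full automorphisms of~$C$) suffice to block-diagonalize~$\mor$. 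Making this work would require arguing that weak exactness at squares with $s_x<n-1$ pins down which horizontal extents in $M'$ can feed a given summand of~$C$, in a way compatible with the triangular structure; you have not done this. As it stands the proposal is an outline with its key lemma missing, whereas the paper's route via indecomposables avoids the issue entirely.
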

Since this result holds in particular for persistence bimodules indexed over squares, it ensures that a pfd persistence module over a square is algebraically weakly exact if, and only if, it is rectangle-decomposable. Hence the equivalence between weak exactness (Definition~\ref{def:weak-exactness-geom}) and algebraic weak exactness (Definition~\ref{def:weak-exactness-alg}), and the correctness of~\Cref{main:rec-dec}.

\medskip

The rest of this section is devoted to the proof of Theorem~\ref{thm:weak-exact-rec-dec}. From this point on, and until the end of the section, whenever we talk about {\em weak exactness} we refer consistently to the algebraic formulation from Definition~\ref{def:weak-exactness-alg}.

\subsection{A preliminary remark concerning submodules and summands}
A morphism $f : \basemod\to\omod$ between two persistence modules over $(\poset,\leq)$ is a \emph{monomorphism} (resp. \emph{epimorphism}) if for every $t\in\poset$, $f_t : \basemod_t \to \omod_t$ is injective (resp. surjective). We say that a monomorphism $f:\basemod \to \omod$ between two persistence modules $\basemod$ and $\omod$ \emph{splits} if there is a morphism $g : \omod\to\basemod$ such that $g\circ f = \identity_\basemod$. If every monomorphism with domain $\basemod$ splits, we say that $\basemod$ is an {\em injective persistence module}. 

It is not true that any submodule of a persistence module is a summand. However, if $f:\basemod \to \omod$ is a monomorphism between two persistence modules $\basemod$ and $\omod$ which splits, it is well known that there is a direct sum decomposition $\omod \cong \basemod \oplus \Coker(f)$. Therefore, an injective submodule of a persistence module is a summand thereof. 
In our analysis we will often use the following result:
\begin{lemma}
  \label{lem:dquad-injective}
  For any indices $k\in\libr 1,n\ribr$ and $l\in\libr 1,m\ribr$, the indicator module $\indimod[\libr 1,k\ribr \times \libr 1,l\ribr]$ is an injective persistence module over $\libr 1, n\ribr \times \libr 1, m\ribr$.
\end{lemma}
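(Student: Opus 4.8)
The plan is to exhibit, for every monomorphism $f : \indimod[\libr 1,k\ribr\times\libr 1,l\ribr]\to\omod$ of persistence modules over $\libr 1,n\ribr\times\libr 1,m\ribr$, an explicit retraction $g:\omod\to\indimod[\libr 1,k\ribr\times\libr 1,l\ribr]$ with $g\circ f=\identity$. Write $R=\libr 1,k\ribr\times\libr 1,l\ribr$; note $R$ is a sub-poset of the grid having a maximum element, namely $w:=(k,l)$, and that the indicator module $\field_R$ is generated in degree $(1,1)$ (or more precisely, every $\field_{R,t}$ for $t\in R$ is the image of $\field_{R,(1,1)}$ under $\field_R$'s internal maps, and is one-dimensional). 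A convenient way to package this is to observe that $\field_R$ is isomorphic, as a persistence module, to the representable-type functor $\Hom_{\cat{grid}}(-,\text{``}\le w\text{''})$ restricted appropriately — but I will avoid categorical machinery and argue directly.

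First I would reduce to understanding $f$ through a single vector. Since $f$ is a morphism, it is determined by commutativity with internal maps; fix $0\ne e\in\field_{R,(1,1)}$ and set $x_{(1,1)}:=f_{(1,1)}(e)\in\omod_{(1,1)}$. For any $t\in R$ we have $f_t(\field_R(\,(1,1)\le t)(e))=\omod(\,(1,1)\le t)(x_{(1,1)})$; since $\field_R(\,(1,1)\le t)(e)$ is a nonzero element of the $1$-dimensional space $\field_{R,t}$ and $f_t$ is injective, the element $x_t:=\omod(\,(1,1)\le t)(x_{(1,1)})$ is nonzero and spans $\Ima f_t$. For $t\notin R$ there is no constraint. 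Now pick any linear functional $\lambda:\omod_w\to\field$ with $\lambda(x_w)=1$; I would like to propagate $\lambda$ backward and forward to define $g$. For $t\le w$ set $g_t:=\lambda\circ\omod(t\le w):\omod_t\to\field=\field_{R,t}$ whenever $t\in R$ (so $t\le w$ automatically), and $g_t:=0$ otherwise. The key computations are then: (i) $g_t(x_t)=\lambda(\omod(t\le w)(x_t))=\lambda(x_w)=1$ for all $t\in R$, which gives $g_t\circ f_t=\identity_{\field_{R,t}}$ on $R$ (both sides are determined on the basis vector), and trivially on the complement; (ii) $g$ commutes with internal maps: for $s\le t$ both in $R$ this is $\lambda\circ\omod(s\le w)=\lambda\circ\omod(t\le w)\circ\omod(s\le t)$, which is the functoriality of $\omod$; and for the remaining cases, if $t\notin R$ then $g_t=0$ and $\field_R(s\le t)=0$ so both composites vanish, while if $s\notin R$ but $t\in R$ then $\field_R(s\le t)=0$ and $g_t\circ\omod(s\le t)=\lambda\circ\omod(t\le w)\circ\omod(s\le t)=\lambda\circ\omod(s\le w)$, which must equal $\field_R(s\le t)\circ g_s=0$; this last vanishing is the only nonroutine point.

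That last point — showing $\lambda\circ\omod(s\le w)=0$ on $\omod_s$ whenever $s\le w$ but $s\notin R$ — is the step I expect to be the main obstacle, because in general $\omod(s\le w)$ need not kill $\omed_s$ there. The fix is to choose $\lambda$ more carefully rather than arbitrarily: I would take $\lambda$ to be a functional that vanishes on a well-chosen subspace. Concretely, note that $s\le w$ with $s\notin R$ means one coordinate of $s$ exceeds the corresponding coordinate of $w$; but since $s\le w$ this is impossible — so in fact \emph{there is no such $s$}: $s\le w=(k,l)$ forces $s_x\le k$ and $s_y\le l$, i.e. $s\in R$. Hence the problematic case is vacuous, and the naive choice of $\lambda$ already works. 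So the real content is just (i)+(ii) in the non-vacuous cases, all of which reduce to functoriality of $\omod$ and injectivity of $f$ to pin down that $x_w\ne 0$ (guaranteeing $\lambda$ exists). The splitting then yields $\omod\cong\field_R\oplus\Coker f$ by the preliminary remark, completing the proof that $\field_R$ is injective. I would close by remarking that the argument is exactly the observation that $\field_R$ is the indicator module of a principal down-set (equivalently, a product of two initial segments, i.e. a "lower-left corner block" within the finite grid), and such modules are the injective indecomposables supported at $w$.
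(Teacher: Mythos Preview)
Your proof is correct. The crucial observation---that $R=\libr 1,k\ribr\times\libr 1,l\ribr$ is a down-set with maximum $w=(k,l)$, so the ``problematic'' case $s\le t$, $s\notin R$, $t\in R$ is vacuous---is exactly right, and the retraction $g_t=\lambda\circ\omod(t\le w)$ for $t\in R$, $g_t=0$ otherwise, does the job cleanly.

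The paper's own proof is a one-line citation: it observes that $\libr 1,k\ribr\times\libr 1,l\ribr$ is a \emph{directed ideal} of the grid (a down-set in which every finite subset has an upper bound in the set) and invokes \cite[Lem.~2.1]{botnan2018decomposition}, which states that indicator modules of directed ideals are injective. Your argument is essentially the special case of that lemma where the directed ideal is principal (has a maximum), proved from scratch: the maximum $w$ lets you collapse the whole construction of $g$ to a single choice of functional $\lambda$ on $\omod_w$. The paper's route is shorter and situates the result in a more general framework; yours is self-contained and makes the mechanism transparent. Both ultimately rest on the same structural fact, namely that $R$ is a down-set admitting cofinal upper bounds, which is what powers the naturality check.

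One cosmetic remark: you have a typo ``$\omed_s$'' for ``$\omod_s$'' in the paragraph discussing the obstacle.
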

\begin{proof}
    This lemma is a consequence of \cite[Lem.~2.1]{botnan2018decomposition} since the subset $\libr 1,k\ribr \times \libr 1,l\ribr$ is clearly a directed ideal of the poset $\libr 1, n\ribr \times \libr 1, m\ribr$, following the definition of \cite[Sec.~2.1]{botnan2018decomposition}.
  \end{proof}
  
\subsection{Proof of Theorem~\ref{thm:weak-exact-rec-dec}}

Uniqueness of the decomposition follows directly from Krull-Schmidt-Remak-Azumaya's theorem \cite{Azumaya1950}, since the endomorphism ring of any rectangle module is clearly isomorphic to~$\field$ and thus local. We therefore focus on  the existence of a decomposition into rectangle summands. Our proof proceeds by induction on the poset of grid dimensions $(n,m)$, also viewed as a subposet of~$\R^2$ equipped with the product order:

\bigskip

\noindent $\bullet$ Our base cases are when $n=1$ or $m=1$. The result  is then a direct consequence of Gabriel's theorem \cite{Gabriel1972}, which asserts that $\basemod$ decomposes as a direct sum of interval modules, each interval being a rectangle of width~$1$.

\bigskip

\noindent $\bullet$ Fix $n > 1$ and $m > 1$, and assume that the
result is true for all grids of sizes $n'\times m'$ such that $(n',
m') < (n,m)$. Fix a persistence module $\basemod$ over $\libr 1, n\ribr \times \libr 1, m\ribr$ that is pfd and
weakly exact. Observe that $\basemod$ has finite \emph{total dimension} $\sum_{t\in\libr 1, n\ribr \times \libr 1, m\ribr}\dim\basemod_t$, so we know
from a simple induction that~$\basemod$ decomposes as a direct sum of
indecomposables---see \cite[Theorem 1.1]{botnan2018decomposition} for a more general statement. As any summand of a weakly exact module is again weakly exact, we may restrict our attention to pfd indecomposable modules. For the sake of contradiction, assume that $\basemod$ is pfd, weakly exact, indecomposable, and not isomorphic to a rectangle module. Then:

\begin{claim}\label{lem:zero-map}
The map $\mor_{(1,1)}^{(n,m)}$ is zero.
\end{claim}
\begin{proof}
Suppose the contrary. Then we have $\Ker \mor_{(1,1)}^{(n,m)} \subsetneq \basemod_{(1,1)}$. Let $\alpha\in \basemod_{(1,1)}\setminus \Ker\mor_{(1,1)}^{(n,m)}$. The submodule $\omod$ of $\basemod$ spanned by the collection of images $(\mor_{(1,1)}^{(i,j)}(\alpha))_{(i,j)\in \llbracket 1,n\rrbracket \times \llbracket 1, m\rrbracket}$ is isomorphic to~$\field_{\llbracket 1, n\rrbracket \times \llbracket 1, m\rrbracket}$, an injective persistence module by \Cref{lem:dquad-injective}. Hence, $\omod$ is a summand of~$\basemod$, contradicting that $\basemod$ is not isomorphic to a rectangle module.
\end{proof}
\begin{claim}\label{lem:injective_mapping}
The space $\basemod_{(1,1)}$ maps injectively to the nodes of the grid $\llbracket 1, n-1\rrbracket \times \llbracket 1, m-1\rrbracket$.
\end{claim}
\begin{proof}
  Let us restrict~$\basemod$ to the grid $\llbracket 1, n-1\rrbracket \times \llbracket 1, m\rrbracket$. The restriction --- denoted by $\omod$---may no longer be indecomposable, however it is still pfd and weakly exact, therefore our induction hypothesis asserts that~$\omod$ decomposes as a finite (internal) direct sum  where each summand is isomorphic to some rectangle module. Consider any one of these summands, say~$\omod'\cong\field_{\rec'}$, such that $(1,1)\in \rec'$. Then, we claim that $(n-1,1)\in \rec'$ as well. Indeed, otherwise, one can extend $\omod'$ to a persistence module over~$\libr 1, n\ribr \times \libr 1, m\ribr$ by putting zero spaces on the last column~$n$. This yields an injective rectangle submodule of~$\basemod$ (\Cref{lem:dquad-injective}), and therefore a rectangle summand of~$\basemod$ --- a contradiction.

By our claim, $\basemod_{(1,1)}$ maps injectively to the nodes $(i,1)$ for
$i\in \llbracket 1, n-1\rrbracket$. Similary, by restricting~$\basemod$ to
the grid $\llbracket 1, n\rrbracket \times \llbracket 1,
m-1\rrbracket$, we deduce that $\basemod_{(1,1)}$ maps injectively to the
nodes $(1,j)$ for $j\in \llbracket 1, m-1\rrbracket$. Then, by weak
exactness, we have
  \[
  \forall (i,j)\in \llbracket 1,
  n-1\rrbracket \times \llbracket 1, m-1\rrbracket,\,\Ker \mor_{(1,1)}^{(i,j)} = \Ker \mor_{(1,1)}^{(i,1)} + \Ker \mor_{(1,1)}^{(1,j)} =0,
  \]
  so $\basemod_{(1,1)}$ maps injectively to all the nodes of the grid $\llbracket 1, n-1\rrbracket \times \llbracket 1, m-1\rrbracket$.
\end{proof}
\begin{claim}\label{lem:zero-spaces}
The spaces $\basemod_{(1,1)}$ and $\basemod_{(n,m)}$ are zero.
\end{claim}
\begin{proof}
By weak exactness and
\Cref{lem:zero-map}, we have
  \[
  \basemod_{(1,1)} = \Ker \mor_{(1,1)}^{(n,m)} = \Ker \mor_{(1,1)}^{(n,1)} + \Ker \mor_{(1,1)}^{(1,m)}.
  \]
  Assuming for a contradiction that $\basemod_{(1,1)}\neq 0$, we have that at least one of the two terms on the right-hand side of the above equation  must be non-zero -- say $\Ker \mor_{(1,1)}^{(n,1)}\neq 0$. Let $\alpha\neq 0$ be an element in that kernel. By \Cref{lem:injective_mapping}, its images at the nodes of $\llbracket 1, n-1\rrbracket \times \llbracket 1, m-1\rrbracket$ are non-zero. Meanwhile, its images at the nodes of $\{n\}\times \llbracket 1, m\rrbracket$ are zero, by composition. There are two cases:
\begin{itemize}
\item Either $\mor_{(1,1)}^{(1,m)}(\alpha)=0$, in which case the images of
  $\alpha$ at the nodes of $\llbracket 1, n\rrbracket \times \{m\}$ are also
  zero, which implies that the persistence submodule of~$\basemod$ spanned by the
  images of~$\alpha$ is isomorphic to~$\field_{\llbracket 1, n-1\rrbracket
    \times \llbracket 1, m-1\rrbracket}$.
\item Or $\mor_{(1,1)}^{(1,m)}(\alpha)\neq 0$, in which case, for all $i\in \llbracket 1,n-1\rrbracket$, we have
  \[
  \alpha\notin \Ker \mor_{(1,1)}^{(1,m)} \stackrel{\mathrm{\scriptstyle (\Cref{lem:injective_mapping})}}{=} \Ker \mor_{(1,1)}^{(1,m)} + \Ker \mor_{(1,1)}^{(i,1)} = \Ker \mor_{(1,1)}^{(i,m)},
  \]
  which implies that the images of $\alpha$ at the nodes of  $\llbracket 1, n-1\rrbracket \times \{m\}$ are non-zero as well. Hence, the persistence submodule of~$\basemod$ spanned by the images of~$\alpha$ is isomorphic to~$\field_{\llbracket 1, n-1\rrbracket \times \llbracket 1, m\rrbracket}$.
\end{itemize}
In both cases, the persistence submodule of~$\basemod$ spanned by the images of~$\alpha$ is an injective rectangle module (\Cref{lem:dquad-injective}), hence a rectangle summand of~$\basemod$ --- a contradiction.

By applying vector-space duality pointwise to~$\basemod$, we obtain an indecomposable module $\basemod^*$ of the grid $\llbracket 1, n \rrbracket^\op \times \llbracket 1, m \rrbracket^\op$---which is isomorphic to~$\llbracket 1, n \rrbracket \times \llbracket 1, m \rrbracket$ as a poset. This persistence module is still pfd, and still weakly exact as well since the equations of weak exactness are stable under vector-space duality (kernels become images, sums become intersections, and vice-versa). Hence, by the first part of the proof,  $\basemod^*_{(1,1)}=0$, i.e the space at node~$(n,m)$ of $\basemod$ is zero, hence the result.
\end{proof}

\begin{claim}\label{lem:M1m_zero-space}
  The space $\basemod_{(1,m)}$ is zero.
\end{claim}
\begin{proof}
  Assume for a contradiction that $\basemod_{(1,m)}\neq 0$. Call $\omod$ the restriction of $\basemod$ to the grid $\llbracket 1,n-1\rrbracket \times \llbracket 1, m \rrbracket$. By our induction hypothesis, $\omod$ decomposes as a finite (internal) direct sum where each summand is isomorphic to some rectangle module. Since $\basemod_{(1,m)} \neq 0$, at least one of these rectangles contains the node~$(1,m)$. Among such rectangles, take one---say $R'=\llbracket 1, i\rrbracket \times \llbracket j, m\rrbracket$---that has lowest lower-left corner, and call~$N'$ the corresponding summand of $\omod$. Denote by $N''$ the rest of the internal decomposition of~$\omod$, i.e. $N = N'\oplus N''$.

First, we claim that $i=n-1$. Indeed, otherwise we can extend $N'$ to a rectangle persistence submodule~$\bar N'$ of~$\basemod$ by putting zero spaces on the last column~$n$, and~$N''$ to another persistence submodule~$\bar N''$ by putting the internal spaces of~$\basemod$ on the last column, so that $M=\bar N' \oplus \bar N''$---a contradiction.

  Second, we claim that $j\in\llbracket 2, m-1\rrbracket$. Indeed, $j\geq 2$ since by \Cref{lem:zero-spaces} we know that $\basemod_{(1,1)}=0$. Meanwhile, if $j$ were equal to~$m$, then $N'$ would go to zero on the last column of~$\libr 1, n\ribr \times \libr 1, m\ribr$ since $\basemod_{(n,m)}=0$ by \Cref{lem:zero-spaces}, and so we could extend $\omod$ to a rectangle persistence submodule~$\bar N'$ of~$\basemod$ by putting zero spaces on the last column, and~$N''$ to another persistence submodule~$\bar N''$ by putting the internal spaces of~$\basemod$ on the last column, so that $M=\bar N' \oplus \bar N''$---a contradiction.

Consider now the space~$N_{(1,j)}=\basemod_{(1,j)}$, and take a generator~$\alpha$
of the subspace~$N'_{(1,j)}\cong \field$. By
\Cref{lem:zero-spaces} we know that the map
$\mor_{(1,j)}^{(n,m)}$ is zero, so by weak
exactness we have $\alpha = \alpha_h + \alpha_v$ for some
$\alpha_h \in \Ker \mor_{(1,j)}^{(n,j)}$ and $\alpha_v\in \Ker
\mor_{(1,j)}^{(1,m)}$. We claim that~$\alpha_h\notin N''_{(1,j)}$. Indeed, otherwise we would have
\[
\mor_{(1,j)}^{(1,m)}(\alpha) = \mor_{(1,j)}^{(1,m)}(\alpha_h) + \mor_{(1,j)}^{(1,m)}(\alpha_v) = \mor_{(1,j)}^{(1,m)}(\alpha_h) \in \mor_{(1,j)}^{(1,m)}(N''_{(1,j)}) \subseteq N''_{(1,m)},
\]
thus contradicting our assumption that~$N=N'\oplus N''$ with the support of~$N'$ containing~$(1,m)$. Likewise, for any node $t\in R'$ we have $\mor_{(1,j)}^t(\alpha_h)\notin N''_t$, for otherwise we would get a contradiction from
\[
\mor_{(1,j)}^{(t_x,m)}(\alpha) = \mor_{(1,j)}^{(t_x,m)}(\alpha_h) = \mor_{t}^{(t_x,m)}(\mor_{(1,j)}^t(\alpha_h)) \in \mor_{t}^{(t_x,m)}(N''_t) \subseteq N''_{(t_x,m)}.
\]
Thus, the persistence submodule~$N^h$ of~$\omod$ generated by~$\alpha_h$ is isomorphic\footnote{Note that we do not need to check that $\alpha_h$ goes to zero when leaving~$R'$, since by assumption $R'$ reaches row~$m$ and, as we saw earlier, $i=n-1$ so $R'$ reaches column~$n-1$ as well.} to~$N'$ and in direct sum with~$N''$. We can therefore exchange~$N'$ for~$N^h$ in the internal decomposition of~$\omod$. Since $N^h$ is mapped to zero on the last column of~$\libr 1, n\ribr \times \libr 1, m\ribr$, we can extend it to a rectangle persistence submodule~$\bar N^h$ of~$\basemod$ by putting zero spaces on the last column,  meanwhile we can extend~$N''$ to another persistence submodule~$\bar N''$ by putting the internal spaces of~$\basemod$ on the last column, so that $M=\bar N^h \oplus \bar N''$---a contradiction.
\end{proof}

\begin{claim}~\label{lem:M1j_zero-space}
$\basemod_{(1,j)} = 0$ for all $j\in \llbracket 1, m\rrbracket$.
\end{claim}
\begin{proof}
The result is already proven\footnote{It is also proven for $j=1$ by
  \Cref{lem:zero-spaces}, although we do not use this fact in
  the proof.} for $j=m$ by
\Cref{lem:M1m_zero-space}. Let then $j\in \llbracket 1,
m-1\rrbracket$. Call~$\omod$ the restriction of~$\basemod$ to the grid
$\llbracket 1, n\rrbracket \times \llbracket 1, m-1 \rrbracket$. By
our induction hypothesis, $\omod$ decomposes as a finite (internal) direct
sum where each summand is isomorphic to some rectangle
module. Assuming for a contradiction that some summand~$N'$
has a support~$R'$ that intersects the first column, we know from
\Cref{lem:M1m_zero-space} that~$N'$ maps to zero at
node~$(1,m)$. By composition, $N'$ maps to zero as well at the nodes
on the last row~$m$. Therefore, as in the proof of
\Cref{lem:M1m_zero-space}, we can extend~$N'$ to a rectangle
summand of~$\basemod$ by putting zero spaces on row~$m$, thus reaching a
contradiction.
\end{proof}

It follows from \Cref{lem:M1j_zero-space} that~$\basemod$ itself is not supported outside the rectangle~$R=\llbracket 2, n\rrbracket \times \llbracket 1,m\rrbracket$. The induction hypothesis (applied to the restriction of~$\basemod$ to $R$) implies then that~$\basemod$ decomposes as a direct sum of rectangle modules, which raises a contradiction. This concludes the induction step and the proof of \Cref{thm:weak-exact-rec-dec}.

%%%%%%%%%%%%%%%%%%
\section{Algorithm for checking rectangle decomposability}
\label{sec:check}
As in Section~\ref{sec:rank_invariant_computation}, let $\filt$ be a simplicial bifiltration with $n$ simplices in total, and let us assume without loss of generality that~$\filt$ is indexed over the grid $\grid=\libr 1, n\ribr \times \libr 1, n\ribr$. We further assume that each arrow $\filt_{(i,j)} \to \filt_{(i+1,j)}$ or $\filt_{(i,j)} \to \filt_{(i,j+1)}$ is either an identity map or a single simplex insertion, and we fix a homology degree~$p$.

Given this input, how fast can we check whether the persistence bimodule~$M$ induced in $p$-th homology decomposes into rectangle summands? An obvious solution is to first decompose~$M$ from the data of~$\filt$, then to check the summands one by one. As explained in Section~\ref{sec:rank_invariant_computation},  the currently best known algorithm for decomposition runs in time~$O(n^{2\omega + 1})$, where $2\leq \omega < 2.373$ is the exponent for matrix multiplication~\cite{dey2019generalized}. The advantage of the algebraic weak exactness condition from Section~\ref{sec:local_condition} is that it can be checked locally, which reduces the total running time to $O(n^{2+\omega})$. Below we sketch the algorithm:
\begin{enumerate}
\item Compute the rank invariant $\rank: \libr 1, n\ribr^2 \times \libr 1, n\ribr^2 \to\N$ of~$M$.
%denoted by~, which by convention takes zero values at indices $s,t$ such that $s\nleq t$.
\item Compute invariants for kernels and images, denoted by $\kappa: \libr 1, n\ribr^2 \times \libr 1, n\ribr^2 \to\N$ and $\iota: \libr 1, n\ribr^2 \times \libr 1, n\ribr^2 \to\N$ respectively, which return the dimensions of $\Ker \mor_s^{(s_x,t_y)} + \Ker \mor_s^{(t_x,s_y)}$ and of $\Ima \mor_{(s_x,t_y)}^t \cap \Ima \mor_{(t_x,s_y)}^t$ respectively at indices $s\leq t$, and zero elsewhere. 
\item For each pair of indices $s\leq t$, check whether $\rank(s,t) = \iota(s,t)$ and $\rank(s,s)-\rank(s,t) = \kappa(s,t)$. If any such equality fails, then answer that $M$ is not rectangle-decomposable. Otherwise, answer that $M$ is rectangle-decomposable.
\end{enumerate}
We now provide further implementation details and analyze the algorithm on the fly:

Step~1 has already been detailed in Section~\ref{sec:rank_invariant_computation} and runs in $O(n^4)$~time.

Step~3 obviously runs in $O(n^4)$ time, and its correctness comes from the commutativity of the square in~\eqref{eq:weak-exactness}: indeed, commutativity  implies that $\Ima \mor_s^t\subseteq \Ima \mor_{(s_x,t_y)}^t \cap \Ima \mor_{(t_x,s_y)}^t$ and $\Ker \mor_s^{(s_x,t_y)} + \Ker \mor_s^{(t_x,s_y)}\subseteq \Ker \mor_s^t$, so checking weak exactness for this square amounts to checking equality between the dimensions of the various spaces involved, hence the equations.

For Step~2, we first compute, for each $t=(j,l)\in\grid$, the barcode of the zigzag module\footnote{A zigzag module is a persistence module indexed over a poset of the form $\xymatrix{\bullet\ar@{<->}[r] & \bullet\ar@{<->}[r] & \cdots \ar@{<->}[r] & \bullet}$, where double-headed arrows mean that the actual arrows can be oriented either forward or backward. Such modules always decompose into direct sums of interval modules~\cite{botnan2015interval,carlsson2010zigzag}.} induced in homology by the following zigzag of simplicial complexes:
\begin{align}\label{eq:ZZ_ima}
    \xymatrix{\filt_{(1,l)}\ar[r]&\cdots\ar[r]&\filt_{(j-1,l)}\ar[r]&\filt_t&\ar[l]\filt_{(j,l-1)}&\ar[l]\cdots&\ar[l]\filt_{(j,1)}}.
  \end{align}
We then do the same with the following zigzag, for  each $s=(i,k)\in\grid$:
\begin{align}\label{eq:ZZ_ker}
    \xymatrix{\filt_{(i,n)}&\ar[l]\cdots&\ar[l]\filt_{(i,k+1)}&\ar[l]\filt_s\ar[r]&\filt_{(i+1,k)}\ar[r]&\cdots\ar[r]&\filt_{(n,k)}}.
  \end{align}
Then, for each indices~$(i,k)=s\leq t=(j,l)$, by restriction,  the dimension of $\Ima \mor_{(i,l)}^t \cap \Ima \mor_{(j,k)}^t$ is given by the number of intervals in the barcode of~\eqref{eq:ZZ_ima} that span the subzigzag~$\xymatrix{\filt_{(i,l)}\ar[r]&\filt_t&\ar[l]\filt_{(j,k)}}$, while (dually) the dimension of $\Ker \mor_s^{(i,l)} + \Ker \mor_s^{(j,k)}$ is given by $\rank(s,s)$ minus the number of intervals in the barcode of~\eqref{eq:ZZ_ker} that span the subzigzag~$\xymatrix{\filt_{(i,l)}&\ar[l]\filt_s\ar[r]&\filt_{(j,k)}}$ (the proof of these simple facts is given in Appendix~\ref{sec:proof_simple_facts}). Regarding the running time: since the zigzags~\eqref{eq:ZZ_ima}-\eqref{eq:ZZ_ker} involve $O(n)$ simplex insertions and deletions each, their barcode computation takes $O(n^\omega)$ using the algorithm based on fast matrix multiplication~\cite{milosavljevic2011zigzag}. Then, each barcode having $O(n)$ intervals, the computation of the dimensions and their storage in tables of integers representing the invariants~$\kappa$ and~$\iota$ takes $O(n)$. This is true for each choice of indices~$s\leq t$, hence a total running time in~$O(n^{2+\omega}+ n^3)= O(n^{2+\omega})$.
As a consequence,
\begin{theorem}\label{thm:check_weak_exactness}
  Checking the rectangle-decomposability of the bimodule induced in homology by a bifiltration with $n$ simplices in total can be done in $O(n^{2+\omega})$~time.
\end{theorem}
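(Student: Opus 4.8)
The strategy is to turn the problem into a collection of pointwise dimension tests. The bimodule $M\colon\grid\to\cat{vec}$ induced in $p$-th homology by~$\filt$ is pfd and (after the harmless reindexing of Section~\ref{sec:rank_invariant_computation}) indexed over the finite grid $\grid=\libr 1,n\ribr\times\libr 1,n\ribr$, so \Cref{main:rec-dec} applies: $M$ is rectangle-decomposable if and only if it satisfies the algebraic weak exactness equalities of \Cref{def:weak-exactness-alg} at every pair $s\le t$ in~$\grid$. Because the square~\eqref{eq:weak-exactness} commutes, the inclusions $\Ima\mor_s^t\subseteq\Ima\mor_{(t_x,s_y)}^t\cap\Ima\mor_{(s_x,t_y)}^t$ and $\Ker\mor_s^{(t_x,s_y)}+\Ker\mor_s^{(s_x,t_y)}\subseteq\Ker\mor_s^t$ always hold, so each equality is equivalent to an equality of dimensions of finite-dimensional spaces. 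Writing $\iota(s,t):=\dim\bigl(\Ima\mor_{(t_x,s_y)}^t\cap\Ima\mor_{(s_x,t_y)}^t\bigr)$ and $\kappa(s,t):=\dim\bigl(\Ker\mor_s^{(t_x,s_y)}+\Ker\mor_s^{(s_x,t_y)}\bigr)$, and using $\dim\Ker\mor_s^t=\rank(s,s)-\rank(s,t)$, the task reduces to: compute $\rank$, $\iota$ and $\kappa$ on all pairs $s\le t$, then declare $M$ rectangle-decomposable precisely when $\rank(s,t)=\iota(s,t)$ and $\rank(s,s)-\rank(s,t)=\kappa(s,t)$ for all such pairs.

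The rank invariant is computed in $O(n^4)$ time by \Cref{thm:rank_invariant_computation}. To obtain $\iota$ and $\kappa$, the plan is to read the required dimensions off the barcodes of zigzag modules built directly from~$\filt$. For a fixed $t=(j,l)$, all the spaces $\Ima\mor_{(i,l)}^t$ and $\Ima\mor_{(j,k)}^t$ are subspaces of $M_t=\homology_p(\filt_t)$ realized simultaneously by the zigzag module induced in homology by~\eqref{eq:ZZ_ima}; by the interval decomposition of zigzag modules, for $s=(i,k)\le t$ one has that $\iota(s,t)$ equals the number of bars of~\eqref{eq:ZZ_ima} that cover the three-term sub-zigzag $\xymatrix{\filt_{(i,l)}\ar[r]&\filt_t&\ar[l]\filt_{(j,k)}}$. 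Dually, anchoring the zigzag~\eqref{eq:ZZ_ker} at $s$, the quantity $\kappa(s,t)$ equals $\rank(s,s)$ minus the number of bars of~\eqref{eq:ZZ_ker} that cover $\xymatrix{\filt_{(i,l)}&\ar[l]\filt_s\ar[r]&\filt_{(j,k)}}$. These two bookkeeping identities would be proved by a direct argument on the interval summands of the ambient zigzag --- an interval contributes $1$ to the intersection of two images exactly when it contains the connecting segment of indices, the kernel statement being its pointwise linear-algebra dual --- which is precisely the content deferred to Appendix~\ref{sec:proof_simple_facts}.

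Finally, the running time: there are $O(n^2)$ zigzags of type~\eqref{eq:ZZ_ima} (one per $t\in\grid$) and $O(n^2)$ of type~\eqref{eq:ZZ_ker} (one per $s\in\grid$), each involving only $O(n)$ simplex insertions and deletions, so each barcode is computed in $O(n^\omega)$ time by fast matrix multiplication~\cite{milosavljevic2011zigzag}, for a total of $O(n^{2+\omega})$. Each such barcode has $O(n)$ bars, and from it we fill the $O(n)$ relevant entries of the $\iota$- and $\kappa$-tables in $O(n)$ time, contributing $O(n^3)$ overall; the rank-invariant computation and the final sweep over all pairs $s\le t$ are $O(n^4)$. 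Hence the procedure runs in $O(n^{2+\omega}+n^4)=O(n^{2+\omega})$ since $\omega\ge 2$. The main obstacle, and the only genuinely non-routine step, is establishing the two combinatorial identities that extract $\iota$ and $\kappa$ from the zigzag barcodes: one must verify that restricting a zigzag barcode to a three-term sub-zigzag computes exactly the intersection of the two boundary images --- and dually the sum of the two boundary kernels --- which requires carefully matching the interval summands of the ambient zigzag against the subspace lattice of $M_t$ (respectively $M_s$).
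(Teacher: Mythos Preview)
Your proposal is correct and follows essentially the same approach as the paper: reduce rectangle-decomposability to the dimension equalities of algebraic weak exactness, compute the rank invariant, extract $\iota$ and $\kappa$ from the barcodes of the zigzags~\eqref{eq:ZZ_ima}--\eqref{eq:ZZ_ker}, and compare. One small bookkeeping slip: for a fixed~$t$ there are $O(n^2)$ pairs $s\le t$ (not $O(n)$) whose $\iota$-entries are read off the barcode of~\eqref{eq:ZZ_ima}, and similarly for~$\kappa$; this changes your $O(n^3)$ table-filling term to $O(n^4)$, but the overall bound $O(n^{2+\omega})$ is unaffected since $\omega\ge 2$.
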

%

%%%%%%%%%%%%%%%%%%
\section{Local characterizations for larger classes of indecomposables}
\label{sec:counter-examples}
\Cref{main:rec-dec} ensures that rectangle-decomposability of a given persistence module can be checked locally by considering restrictions to commutative squares. A natural next question to consider is then: to what extent can interval-decomposability be locally determined when allowing for intervals of more general shape than rectangles? In this section we provide two negative results: We show that interval-decomposability itself cannot be characterized locally, even when testing on arbitrary strict subgrids. Then we show that decomposability into a class of interval modules strictly containing rectangle modules cannot be locally determined by means of restrictions to squares. 

\subsection{Characterizing interval-decomposability}
\label{sec:simple-cases}
\subsubsection{Testing on totally ordered subsets.}
Denote by~$\totallysubpos[\poset]$ the set of all totally ordered subsets of~$\poset$. It has been shown in \cite{Crawley-Boevey2012,botnan2018decomposition} that any pfd persistence module indexed over a totally ordered set is interval-decomposable. Therefore, if~$M$ is a pfd persistence bimodule indexed over~$\poset$, the restriction~$M_{|Q}$ is interval-decomposable for any~$Q\in\totallysubpos[\poset]$. Hence, any indecomposable module over~$\poset$ that is not of pointwise dimension~$0$ or~$1$ (such as the one defined in \Cref{ex:3-2-indecomposable}) is a counter-example to the existence of a local characterization of interval-decomposability over totally ordered subsets.

\subsubsection{Testing on squares.} 
Recall that the restriction of a pfd persistence module over~$\poset$ to any commutative square is interval-decomposable (see e.g. Figure 13 in \cite{escolar2016persistence}). Therefore, any indecomposable module over~$\poset$ that is not of pointwise dimension~$0$ or~$1$ (see again \Cref{ex:3-2-indecomposable}) is a counter-example to the existence of a local characterization of interval-decomposability over squares.

\subsubsection{Testing on finite grids of bounded size.}

Our analysis proceeds in two steps: first we consider the special case where~$\poset$ is the finite grid~$\libr 1, n+1\ribr^2$, then we extend the analysis to the case of general finite product posets. The intuition behind our constructions is given in the following section.

\paragraph{A minimal indecomposable.}
\label{sec:indecomposable}
Let~$n\geq 2$ be an integer, and consider the poset~$\dart{n}$  given by the following Hasse diagram:
\[
\xymatrix@=10pt{
  1 \ar[rrrr] &&&& n+2 \\
  & 2\ar[urrr] \\
  && \ddots \\
  &&& n \ar[uuur] \\
  &&&& n+1 \ar[uuuu]
}
\]
Denote by~$\iota_i$ the inclusion of the~$i$-th axis~$\field\hookrightarrow\field^n$, and by~$\delta_n$ the injection into the diagonal~$t\in\field\mapsto (t,\dots,t)\in\field^n$. Let ~$\indec{n}$ denote the persistence module over~$\dart{n}$ --- which
can be easily be seen as a subposet of~$\R^2$ --- given by the following diagram:
\[
\xymatrix@=10pt{
  \field \ar^-{\iota_1}[rrrr] &&&& \field^n \\
  & \field \ar^-{\iota_2}[urrr] \\
  && \ddots \\
  &&& \field \ar^-{\iota_n}[uuur] \\
  &&&& \field \ar_-{\delta_n}[uuuu]
}
\]
\begin{lemma}\label{lem:indecomposable}
    The persistence module~$\indec{n}$ satisfies:
    \begin{enumerate}
    \item $\indec{n}$ is indecomposable with local endomorphism ring;
    \item for any~$i\in\libr 1,n+1\ribr$, the restriction~$\indec{n}_{|\dart{n}\setminus\{i\}}$ decomposes as follows:
        \begin{equation*}
        \indec{n}_{|\dart{n}\setminus\{i\}} \cong \bigoplus_{j\in\libr 1,n+1\ribr\setminus\{i\}} \indimod[\{j,n+2\}],
        \end{equation*}
        where~$\indimod[\{j,n+2\}]$ is the indicator module of the set~$\{j, n+2\}$.
    \end{enumerate}
\end{lemma}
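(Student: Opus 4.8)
For part~(2), I would argue directly from the definition of the restriction. Removing a node $i\in\libr 1,n+1\ribr$ from $\dart{n}$ leaves a poset whose Hasse diagram consists of the node $n+2$ together with the remaining sources $\{1,\dots,n+1\}\setminus\{i\}$, each connected by a single arrow to $n+2$; this poset has no zigzags, so its indecomposables are exactly the modules supported on $\{j,n+2\}$ (for $j\neq i$) and on $\{n+2\}$. Concretely, I would produce an explicit isomorphism: choose a basis of $\field^n = \indec{n}_{n+2}$ adapted to the situation. When $i\in\libr 1,n\ribr$, the images of the remaining maps are the $n-1$ coordinate axes $\{\iota_j : j\neq i\}$ together with the diagonal line $\Ima\delta_n$; since any $n-1$ coordinate axes together with the diagonal span $\field^n$ and are in direct sum (the diagonal has zero intersection with the span of any $n-1$ coordinate axes, and the axes are independent), the module splits as $\bigoplus_{j\neq i}\indimod[\{j,n+2\}]$, where the summand indexed by $n+1$ uses the line $\Ima\delta_n$. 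When $i=n+1$, the images are all $n$ coordinate axes, which are manifestly independent and span, giving $\bigoplus_{j=1}^n\indimod[\{j,n+2\}]$. In all cases no summand $\indimod[\{n+2\}]$ appears because the listed lines already span $\field^n$.

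For part~(1), I would compute $\End(\indec{n})$ directly. An endomorphism $\phi$ is a tuple of linear maps, one per node, commuting with the structure maps. At each source node the space is $\field$, so $\phi$ acts there by a scalar $\lambda_j$ (for $j\in\libr 1,n+1\ribr$); at $n+2$ it is a matrix $A\in\End(\field^n)$. Commutativity with $\iota_j$ for $j\in\libr 1,n\ribr$ forces $A e_j = \lambda_j e_j$, so $A$ is diagonal with entries $\lambda_1,\dots,\lambda_n$; commutativity with $\delta_n$ forces $A(1,\dots,1)^{\mathsf T} = \lambda_{n+1}(1,\dots,1)^{\mathsf T}$, i.e. $\lambda_1 = \lambda_2 = \cdots = \lambda_n = \lambda_{n+1}$. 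Hence $A = \lambda\,\identity$ and all scalars equal $\lambda$, so $\End(\indec{n})\cong\field$. Since $\field$ is local (indeed a field), $\indec{n}$ has local endomorphism ring, and in particular it is indecomposable (a nontrivial decomposition would produce a nontrivial idempotent in $\End(\indec{n})$).

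I do not expect a serious obstacle here: both parts reduce to elementary linear algebra over $\field^n$. The only point requiring a little care is the case analysis in part~(2) — specifically, verifying that any $n-1$ of the coordinate axes together with the diagonal line genuinely form a direct-sum decomposition of $\field^n$ (not merely a spanning set). This follows because the diagonal vector $(1,\dots,1)$ has a nonzero coordinate in every slot, so it cannot lie in the span of any proper coordinate subspace, and a dimension count then closes the argument. Once this is in place, the explicit isomorphism of persistence modules is immediate since all structure maps are either inclusions of these lines into $\field^n$ or identities.
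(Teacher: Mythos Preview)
Your proposal is correct and follows essentially the same approach as the paper: both compute $\End(\indec{n})\cong\field$ directly via the commutativity constraints (the paper merely asserts this is ``straightforward''), and both handle part~(2) by the change of basis at node~$n+2$ that replaces the $i$-th coordinate axis with the diagonal line. The paper phrases part~(2) as a reduction from the case $i\in\libr 1,n\ribr$ to the case $i=n+1$ via an identification of the restricted posets, while you verify the direct-sum decomposition explicitly in each case; these are the same argument, yours simply unpacking the change of basis the paper leaves implicit.
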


\begin{proof}
    It is straightforward to check that~$\indec{n}$ has endomorphism ring isomorphic to~$\field$, which is local. Therefore, $\indec{n}$ is indecomposable. Now, if~$i = n+1$ then the decomposition of~$\indec{n}_{|\dart{n}\setminus\{i\}}$ is obvious, while if~$i\in\libr 1, n\ribr$ then a simple change of basis in the space~$\indec{n}_{n+2}$ yields an isomorphism between~$\indec{n}_{|\dart{n}\setminus\{i\}}$ and~$\indec{n}_{|\dart{n}\setminus\{n+1\}}$ via the identification~$\dart{n}\setminus\{i\} \simeq \dart{n}\setminus\{n+1\}$, which brings us back to the case where~$i = n+1$.
\end{proof}

\paragraph{Negative result for the poset~$\libr 1, n+1\ribr^2$}
\label{sec:special-case_sqgrid}

% Denote~$\subgrids$ the set of strict subgrids of~$\libr 1, n+1\ribr^2$, i.e~$\subgrids = \fullsubpos[\libr 1, n+1\ribr^2]\setminus\{\libr 1, n+1\ribr^2\}$ and 
Given~$n\geq 2$, define the following persistence module over~$\libr 1,n+1\ribr^2$, where dotted lines stand for zero maps or chains of zero maps, unspecified solid lines stand for identity maps, and dashed lines stand for chains of identity maps: 
\begin{equation}
    \label{eqn:def-indec-grid}
    \indecgrid{n} :=
    \begin{tikzcd}[arrows=-stealth]
        \field \arrow[r, "\iota_1"]            & \field^n \arrow[r]                               & \field^n \arrow[r, dashed]           & \field^n \arrow[r]                               & \field^n                     \\
        0 \arrow[u, dotted] \arrow[r, dotted]  & \field \arrow[u, "\iota_2"] \arrow[r, "\iota_2"] & \field^n \arrow[u] \arrow[r, dashed] & \field^n \arrow[r] \arrow[u]                     & \field^n \arrow[u]           \\
                                               &                                                  & \ddots                               & \field^n \arrow[u, dashed] \arrow[r]             & \field^n \arrow[u, dashed]   \\
        0 \arrow[r, dotted] \arrow[uu, dotted] & 0 \arrow[uu, dotted] \arrow[rr, dotted]          &                                      & \field \arrow[r, "\iota_n"] \arrow[u, "\iota_n"] & \field^n \arrow[u]           \\
        0 \arrow[u, dotted] \arrow[r, dotted]  & 0 \arrow[u, dotted] \arrow[rr, dotted]           &                                      & 0 \arrow[u, dotted] \arrow[r, dotted]            & \field \arrow[u, "\delta_n"]
        \end{tikzcd}
\end{equation}
In more abstract terms, defining the monomorphism of posets:
\begin{equation}
    \phi :
    \begin{array}{ccl}
        \dart{n} &\hookrightarrow & \libr 1, n+1\ribr ^2 \\[1ex]
        i & \mapsto &
        \begin{cases}
            (i, n+2-i) &(i \ne n+2)\\
            (n+1,n+1) &(i = n+2)
        \end{cases}
    \end{array}
    ,
\end{equation}
we have:
\begin{equation}
    \label{eq:charac-indecgrid-Ran}
    \indecgrid{n} \simeq \Ran_\phi \indec{n}.
\end{equation}
This can be easily seen from the description of right Kan extensions  of persistence modules as "floor" modules~\cite[Sec.~2.5]{Botnan2016}. More precisely, we have for all~$t\in\libr 1,n+1\ribr^2$:
\begin{equation*}\label{eqn:characterization-ran}
    \indecgrid{n}_t = \lim \indec{n}_{|\phi_{\geq t}},
\end{equation*}
where~$\phi_{\geq t}$ denotes the upset~$\{u\in\dart{n} \,|\, \phi(u) \geq t\}$. Internal morphisms for~$s\leq t$ in~$\libr 1,n+1\ribr^2$ are given by the universality of limits. 
%
%\Cref{lem:indecomposable} then implies:
%
\begin{proposition}
    \label{prop:counter-example-grid}
    For $n\geq 2$, the persistence module~$\indecgrid{n}$ satisfies:
    \begin{enumerate}
        \item\label{itm:indecomposable}~$\indecgrid{n}$ is not interval-decomposable;
%        \item\label{itm:indecomposable}~$\indecgrid{n}$ is indecomposable with local endomorphism ring;
        \item\label{itm:minimal} for any strict subgrid~$X'\times Y'\subsetneq \libr 1,n+1\ribr^2$, the restriction~$\indecgrid{n}_{|X'\times Y'}$ is interval-decomposable.
    \end{enumerate}
\end{proposition}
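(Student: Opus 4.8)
The plan is to prove the two parts by exploiting the description \eqref{eq:charac-indecgrid-Ran} of $\indecgrid{n}$ as a right Kan extension of the minimal indecomposable $\indec{n}$ from Lemma~\ref{lem:indecomposable}, together with the ``floor module'' formula $\indecgrid{n}_t = \lim \indec{n}_{|\phi_{\geq t}}$. For part~\eqref{itm:indecomposable}, I would first show that $\indecgrid{n}$ is indecomposable, and then observe that it is not a single interval module since its pointwise dimension is not bounded by~$1$ (the space at $(1,1)$, or at the other corner nodes along $\phi(\dart n)$, is $\field$ while the space at $\phi(n+2)=(n+1,n+1)$ is $\field^n$). To see indecomposability, I would compute $\End(\indecgrid{n})$: since $\Ran_\phi$ is right adjoint to restriction, $\Hom(\indecgrid{n}, \indecgrid{n}) \cong \Hom((\indecgrid{n})_{|\phi(\dart n)}, \indec{n})$; unwinding the diagram \eqref{eqn:def-indec-grid} one checks that an endomorphism is determined by a scalar (its value on the generator at $(1,n+1)$), so the endomorphism ring is $\field$, hence local, hence $\indecgrid{n}$ is indecomposable. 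Being indecomposable and of pointwise dimension $>1$ somewhere, it cannot be interval-decomposable.

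For part~\eqref{itm:minimal}, fix a strict subgrid $X'\times Y' \subsetneq \libr 1,n+1\ribr^2$. Since it is strict, either $X'$ misses some column $x_0\in\libr 1,n+1\ribr$ or $Y'$ misses some row $y_0\in\libr 1,n+1\ribr$; by the symmetry of the construction (transpose), assume a column $x_0$ is missing. The key point is that along $\phi(\dart n)$ the module $\indecgrid{n}$ reproduces $\indec{n}$, and deleting the column $x_0$ removes exactly the node $\phi(x_0) = (x_0, n+2-x_0)$ from the relevant part of the support (or, if $x_0 = n+1$, it removes the apex node $\phi(n+2)$; but $x_0=n+1$ also deletes the node $(n+1,1)=\phi(n+1)$, so in every case at least one node of $\phi(\dart n)$ is removed). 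I would then argue that $\indecgrid{n}_{|X'\times Y'}$ decomposes as a direct sum of rectangle modules. Concretely: using the fact (Lemma~\ref{lem:dquad-injective} and its translates, or the splitting behaviour of ``staircase'' submodules) that once one node of the diagonal chain $\phi(\dart n)$ is removed the remaining diagram decomposes in the same way that $\indec{n}_{|\dart n\setminus\{i\}}$ decomposes by Lemma~\ref{lem:indecomposable}(2) --- namely into pieces each supported on a single ``ray'' through the apex --- one can split the Kan-extended module accordingly. Each resulting summand, being the Kan extension of an indicator module on a totally ordered subset of $\dart n$, is itself an indicator module on a (convex, connected) subset of the grid; a short check shows these subsets are in fact rectangles (or at least intervals), giving interval-decomposability.

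The main obstacle I anticipate is making the ``deleting a column of the grid removes a node of $\phi(\dart n)$ and hence splits the module'' step fully rigorous: restriction along $X'\times Y'\hookrightarrow \libr 1,n+1\ribr^2$ does not literally commute with $\Ran_\phi$, so I cannot simply say $\indecgrid{n}_{|X'\times Y'} \cong \Ran_{\phi'}(\indec{n}_{|\dart n\setminus\{i\}})$. Instead I would work directly with the floor-module formula: for $t\in X'\times Y'$, $\indecgrid{n}_{|X'\times Y'}$ at $t$ is still $\lim \indec{n}_{|\phi_{\geq t}}$, but one must recompute these limits and the structure maps using only the relations visible inside $X'\times Y'$, and verify that the resulting bimodule is isomorphic to an explicit direct sum of rectangle modules whose supports I write down case by case (cases: $x_0 \in \libr 1,n\ribr$ deleted; $x_0 = n+1$ deleted; and the transposed row cases). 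The bookkeeping of which rectangle supports appear, and checking they are genuinely rectangles and that the internal identity/zero maps match, is the technical heart of the argument; everything else is routine.
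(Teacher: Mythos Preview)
Your plan for item~\ref{itm:indecomposable} coincides with the paper's: use the $(\phi^*,\Ran_\phi)$ adjunction together with $\phi^*\Ran_\phi\indec{n}\cong\indec{n}$ (fully-faithfulness of $\phi$) to get $\End(\indecgrid{n})\cong\End(\indec{n})$, which is local by Lemma~\ref{lem:indecomposable}(1). This is exactly Lemma~\ref{lem:endoring-Kan} in the paper.

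For item~\ref{itm:minimal} your strategy also matches the paper's --- reduce to deleting one column, observe that this removes a node of $\phi(\dart n)$, and invoke the splitting of $\indec{n}_{|\dart n\setminus\{i\}}$ from Lemma~\ref{lem:indecomposable}(2) --- but you are more careful at precisely the step where caution is needed. The paper asserts
\[
\indecgrid{n}_{|\libr 1,n+1\ribr^2\setminus C}\ \simeq\ \Ran_{\phi_i}\bigl(\indec{n}_{|\dart n\setminus\{i\}}\bigr)
\]
as something one ``can easily see,'' whereas you flag exactly this as the main obstacle. Your suspicion is justified: the displayed isomorphism is \emph{false} as stated. For $n=2$ with $C$ the middle column, the left-hand side has the zero space at $(1,2)$ (since $(\indecgrid{2})_{(1,2)}=0$), while the right-hand side has $\bigl(\Ran_{\phi_2}(\indec{2}_{|\dart 2\setminus\{2\}})\bigr)_{(1,2)} = \lim\bigl(\field\xrightarrow{\iota_1}\field^2\bigr) = \field$. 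The two bimodules even have different barcodes: the actual restriction has a hook-shaped summand on $\{(1,3),(3,2),(3,3)\}$, while the Kan extension is rectangle-decomposable.

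So your proposed direct verification from the explicit diagram~\eqref{eqn:def-indec-grid} is the honest route, and it need not be a lengthy case analysis. After removing column $x_0\in\libr 1,n\ribr$, the images of the $n$ surviving legs --- the vectors $e_j$ for $j\neq x_0$ together with $e_1+\cdots+e_n$ --- form a basis of $\field^n$; changing to this basis simultaneously at every $\field^n$ node splits the restricted module as $\bigoplus_{j\neq x_0} T_j$, where $T_j$ is supported on $\{\phi(j)\}\cup\{t: t_x+t_y>n+2,\ t_x\neq x_0\}$. A short convexity/connectivity check shows each support is an interval, but \emph{not} a rectangle in general --- so your hedge ``or at least intervals'' is essential, and you should drop the suggestion that they might all be rectangles. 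The case $x_0=n+1$ (which removes \emph{two} nodes of $\phi(\dart n)$, namely $\phi(n+1)$ and $\phi(n+2)$) is even simpler since the standard basis already diagonalises the remaining legs; rows are handled by transposition.
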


\begin{proof}
    The monomorphism $\phi$ being fully faithful,  \Cref{lem:endoring-Kan} implies that the endomorphism ring of~$\indecgrid{n}$ is isomorphic to that of~$\indec{n}$, which is local by \Cref{lem:indecomposable}. Hence, $\indecgrid{n}$ is indecomposable, and since it is not of pointwise dimension $0$ or $1$, it is not an interval module. This proves item~\ref{itm:indecomposable}.
%follows from \Cref{lem:endoring-Kan} since $\indecgrid{n}$ is the right Kan extension along $\phi$ of $\indec{n}$, an indecomposable module with local endomorphism ring by \Cref{lem:indecomposable}.
For~\ref{itm:minimal}, since any strict subgrid~$X'\times Y'$ of~$\libr 1,n+1\ribr ^2$ misses at least one row or one column of~$\libr 1,n+1\ribr ^2$, we will merely show that the restriction of~$\indecgrid{n}$ to~$\libr 1,n+1\ribr ^2 \setminus C$, where~$C$ denotes an arbitrary column of~$\libr 1,n+1\ribr ^2$, is interval-decomposable. Indeed, the result for~$\libr 1,n+1\ribr ^2 \setminus R$ where~$R$ is a row of~$\libr 1,n+1\ribr ^2$ is obtained analogously, and then the result for the restriction of~$\indecgrid{n}$ to any strict subgrid~$X'\times Y'$ follows by restriction.

    A column $C$ of $\libr 1,n+1\ribr ^2$ contains exactly one point of the form $(i,n+2-i)$ for an $i\in\libr 1,n+1\ribr$. We denote by $C_i$ the column containing the point $(i, n+2-i)$. Hence, the corestriction of $\phi_{|\dart{n}\setminus\{i\}}$ to $\libr 1,n+1\ribr ^2\setminus C_i$ is well-defined, denote it by $\phi_i : \dart{n}\setminus\{i\} \hookrightarrow \libr 1,n+1\ribr ^2\setminus C_i$, and we can easily see that:
    \begin{equation}
        \label{eq:charac-indecgrid-Ran-restriction}
        \indecgrid{n}_{|\libr 1,n+1\ribr ^2\setminus C_i} \simeq \Ran_{\phi_i} \left( \indec{n}_{|\dart{n}\setminus\{i\}} \right).
    \end{equation}
    Moreover, for any $j\in\libr 1,n+1\ribr\setminus\{i\}$, the module $\Ran_{\phi_i} \field_{\{j,n+2\}}$ is clearly an interval module, and in particular the finite direct sum $\bigoplus_{j\in\libr 1,n+1\ribr\setminus\{i\}} \Ran_{\phi_i}\field_{\{j,n+2\}}$ is pointwise-finite dimensional. For instance, $\Ran_{\phi_{n+2}} \field_{\{1,n+2\}}$ is isomorphic to:
    \begin{equation}
        \begin{tikzcd}[arrows=-stealth]
            \field \arrow[r]               & \field \arrow[r]                               & \field \arrow[r, dashed]             & \field                         \\
            0 \arrow[u, dotted] \arrow[r, dotted]     & 0 \arrow[u, dotted] \arrow[r, dotted]          & \field \arrow[u] \arrow[r, dashed]   & \field \arrow[u]            \\
                                                      &                                                & \ddots                               & \field \arrow[u, dashed]    \\
            0 \arrow[r, dotted] \arrow[uu, dotted]    & 0 \arrow[uu, dotted] \arrow[rr, dotted]        &                                      & 0 \arrow[u, dotted]   \\
            0 \arrow[u, dotted] \arrow[r, dotted]     & 0 \arrow[u, dotted] \arrow[rr, dotted]         &                                      & 0 \arrow[u, dotted]          
        \end{tikzcd}
    \end{equation}
    Therefore, using \Cref{lem:indecomposable} and the fact that pfd right Kan extensions commute with direct-sums of pfd modules~\cite[Rk.~2.16]{Botnan2016}, we get an interval-decomposition:
    \begin{equation*}
        \Ran_{\phi_i} \left( \indec{n}_{|\dart{n}\setminus\{i\}} \right) \simeq \bigoplus_{j\in\libr 1,n+1\ribr\setminus\{i\}} \Ran_{\phi_i}\field_{\{j,n+2\}}, 
    \end{equation*}
    hence the result by \eqref{eq:charac-indecgrid-Ran-restriction}.
\end{proof}

\Cref{prop:counter-example-grid} immediately implies a similar result for more general finite grids.

\begin{corollary}
    \label{cor:negative-int-dec-subgrids}
    If $X,Y$ are arbitrary finite subsets of $\R$ with $|X|\geq 3$, $|Y|\geq 3$, then there exists a pfd persistence module $M$ indexed over $\poset$ such that:
    \begin{enumerate}
        \item $M$ is not interval-decomposable; %indecomposable with local endomorphism ring,
        \item for any subgrid~$X'\times Y'\subsetneq \poset$ with $|X'| < \min(|X|,|Y|)$ or $|Y'| < \min(|X|,|Y|)$, the restriction~$M_{|X'\times Y'}$ is interval-decomposable.
    \end{enumerate}
\end{corollary}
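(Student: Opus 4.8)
The plan is to transfer \Cref{prop:counter-example-grid} to an arbitrary finite grid by planting a copy of $\indecgrid{n}$ inside $\poset$ and padding it with zeros. Assume without loss of generality that $X=\libr 1,|X|\ribr$ and $Y=\libr 1,|Y|\ribr$, set $n=\min(|X|,|Y|)-1\ge 2$, and regard $\libr 1,n+1\ribr^2$ as the bottom-left subgrid $B:=\{t\in\poset \mid t_x\le n+1 \text{ and } t_y\le n+1\}$ of $\poset$; being a principal downset, $B$ is order-convex in $\poset$. I would take $M$ to be the extension of $\indecgrid{n}$ by zero from $B$ to $\poset$, i.e. $M_t=\indecgrid{n}_t$ for $t\in B$, $M_t=0$ otherwise, with $M(s\le t)=\indecgrid{n}(s\le t)$ when $s,t\in B$ and $M(s\le t)=0$ otherwise. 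Order-convexity of $B$ is exactly what makes $M$ a well-defined pfd persistence module, and by construction $M_{|B}=\indecgrid{n}$.

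For item~1, I would prove that $M$ is indecomposable with endomorphism ring isomorphic to $\field$. Indeed, any endomorphism of $M$ has zero component at each $t\notin B$ (the spaces vanish there) and restricts over $B$ to an endomorphism of $\indecgrid{n}$; the latter is a scalar since $\End(\indecgrid{n})\cong\field$, as established in the proof of \Cref{prop:counter-example-grid} (through \Cref{lem:indecomposable}). Hence $\End(M)\cong\field$ is local and $M$ is indecomposable. As $M$ has a space of dimension $n\ge 2$ it is not an interval module, so by the Krull--Schmidt--Remak--Azumaya theorem it admits no decomposition into interval modules.

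For item~2, fix a subgrid $X'\times Y'\subsetneq\poset$ with $|X'|<\min(|X|,|Y|)=n+1$ (the case $|Y'|<n+1$ is analogous, with a missing row of $\libr 1,n+1\ribr^2$ in place of a missing column). Set $X''=X'\cap\libr 1,n+1\ribr$ and $Y''=Y'\cap\libr 1,n+1\ribr$; these are downsets of $X'$ and $Y'$, so $X''\times Y''$ is order-convex in $X'\times Y'$, and by construction $M_{|X'\times Y'}$ is the extension by zero of $\indecgrid{n}_{|X''\times Y''}$ to $X'\times Y'$. Since $|X''|\le|X'|<n+1$ we have $X''\subsetneq\libr 1,n+1\ribr$, hence $X''\times Y''$ is a strict subgrid of $\libr 1,n+1\ribr^2$ and \Cref{prop:counter-example-grid}(2) provides an interval-decomposition $\indecgrid{n}_{|X''\times Y''}\cong\bigoplus_i\indimod[Q_i]$. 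Extension by zero commutes with direct sums, and because $X''\times Y''$ is convex in $X'\times Y'$ each $Q_i$ remains convex and zigzag-connected in $X'\times Y'$, so the extension by zero of $\indimod[Q_i]$ is again the interval module $\indimod[Q_i]$ over $X'\times Y'$. Therefore $M_{|X'\times Y'}\cong\bigoplus_i\indimod[Q_i]$ is interval-decomposable.

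All the mathematical content lives in \Cref{prop:counter-example-grid}; the one step requiring care is the order-convexity bookkeeping — choosing $B$ to be a downset and checking that each $X''\times Y''$ sits convexly inside $X'\times Y'$ — which is what ensures that the relevant ``extensions by zero'' and restrictions are genuine persistence modules and that extension by zero turns an interval module into an interval module. I do not anticipate a genuine obstacle; this corollary really is the routine transfer the statement advertises. One could alternatively avoid extensions by zero altogether by taking $M$ to be the pullback of $\indecgrid{n}$ along the monotone retraction $\poset\to\libr 1,n+1\ribr^2$ that clamps each coordinate at $n+1$, at the cost of checking instead that pullbacks along surjective monotone maps of finite grids preserve interval-decomposability.
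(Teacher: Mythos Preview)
Your proof is correct and follows essentially the same approach as the paper's: plant $\indecgrid{m-1}$ (with $m=\min(|X|,|Y|)$) in the bottom-left $m\times m$ subgrid, extend by zero, and invoke \Cref{prop:counter-example-grid}. You supply considerably more detail than the paper's two-sentence proof---in particular the endomorphism-ring computation for item~1 and the order-convexity bookkeeping for item~2---but the underlying construction and reduction are identical.
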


\begin{proof}
    Let~$m = \min(|X|,|Y|)$. Since $m \geq 3$, we can define a persistence module $M$ on $\poset$ by copy-pasting the spaces and maps of $\indecgrid{m-1}$ to the most bottom-left subgrid of size $m \times m$ of $\poset$, and by setting all other spaces and maps to zero. The result follows then from \Cref{prop:counter-example-grid}.
\end{proof}

\subsection{Local characterizations for other classes of interval-decomposable modules}
\label{sec:adding-hook} 
Since rectangle-decomposability can be characterized locally on squares, while general interval-decomposability cannot,  it is natural to ask what happens with classes of indecomposables standing in-between the rectangle modules and the interval modules. \Cref{thm:hooks} below shows that certain such classes cannot be characterized locally on squares either. In the following, we use the notations of~\eqref{eq:def-int-square} for the intervals of a square---note that only two of these intervals are not rectangles, namely $\{a,b,c\}$ (called \emph{bottom hook}) and $\{b,c,d\}$ (called \emph{top hook}). 

\begin{theorem}\label{thm:hooks}
    Let $X,Y$ be arbitrary finite subsets of $\R$ such that $|X| \geq 2$ and $|Y| \geq 2$, and that $(|X|, |Y|) \neq (2, 2)$. Then, there exists a pfd persistence module $M$ indexed over $\poset$ such that:
    \begin{enumerate}
        \item $M$ is not interval-decomposable; %indecomposable with local endomorphism ring,
        \item for any square~$Q$ of $\poset$, the restriction~$M_{|Q}$ is interval-decomposable and its barcode is included in $\mathcal{R}\cup\{\{b,c,d\}\}$.
    \end{enumerate}
    The same result holds for $\mathcal{R}\cup\{\{a,b,c\}\}$.
\end{theorem}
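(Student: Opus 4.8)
The plan is to build the desired persistence module by the same Kan-extension technique used in the previous section: take an explicit small indecomposable that restricts nicely on squares, and push it forward (or pull it back) to the grid $\poset$ via a monomorphism of posets, so that restrictions to squares decompose into intervals of the allowed shapes while the whole module remains indecomposable of pointwise dimension $>1$. Concretely, I would first construct a gadget indecomposable module $P$ over a small fence-like poset (a zigzag/commutative-ladder shape) whose one-vertex restrictions decompose into indicator modules of rectangles and top hooks only; the natural candidate is a ladder $\libr 1,k\ribr\times\libr 1,2\ribr$ carrying a suitable configuration of $\field$'s and $\field^n$'s with inclusions into coordinate axes and the diagonal, analogous to $\indec{n}$ and $\indecgrid{n}$. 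The key point is that the only non-rectangle interval that can appear locally is the top hook $\{b,c,d\}$ (respectively the bottom hook, for the ``same result holds'' clause, obtained by applying the vertical/horizontal flip or vector-space duality).

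Next I would handle the smallest genuine case, which by hypothesis is a grid with $|X|,|Y|\ge 2$ and $(|X|,|Y|)\neq(2,2)$, i.e. at least a $2\times 3$ or $3\times 2$ grid. For $\libr 1,2\ribr\times\libr 1,3\ribr$ I would exhibit a concrete $4$- or $5$-space module (a small commutative ladder with one $\field^2$ and several $\field$'s) that is indecomposable --- check that $\End\cong\field$, hence local, so it is indecomposable by Krull--Schmidt --- yet whose restriction to each of its (two) squares is a direct sum of at most a top hook and some rectangle/interval modules. Since the module has a point of dimension $2$, it is not an interval module, hence not interval-decomposable. This already proves the theorem for $(|X|,|Y|)=(2,3)$ and, by transposition, for $(3,2)$. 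I would phrase the ladder abstractly via a fully faithful poset embedding $\phi$ from a fence $\dart{}$-type poset and $\Ran_\phi$ (or $\Lan_\phi$), invoking \Cref{lem:endoring-Kan} to transfer indecomposability and the ``floor module'' description to read off the square restrictions; this mirrors the proof of \Cref{prop:counter-example-grid} almost verbatim.

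Finally, for arbitrary finite $X,Y$ with $|X|\ge 2$, $|Y|\ge 2$ and $(|X|,|Y|)\neq(2,2)$, I would reduce to the small case by the same copy-paste argument as in \Cref{cor:negative-int-dec-subgrids}: place the gadget module on the bottom-left $2\times 3$ (or $3\times 2$) subgrid of $\poset$ and set all other spaces and maps to zero. This extension is still indecomposable (zero spaces contribute nothing to the endomorphism ring beyond what is forced), still not interval-decomposable, and every square $Q$ of $\poset$ restricts either to a zero module, to a square entirely inside the gadget, or to a square straddling the boundary; in each case the square restriction is a sum of rectangle modules plus at most one top hook. The ``same result for $\mathcal{R}\cup\{\{a,b,c\}\}$'' follows by reflecting the construction across the anti-diagonal (swap the roles of the two coordinates, or equivalently post-compose with the order-reversal that turns top hooks into bottom hooks).

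I expect the main obstacle to be the explicit gadget construction: one must choose the spaces and maps on the ladder so that the module is provably indecomposable (local endomorphism ring) while simultaneously guaranteeing that \emph{no} square restriction ever produces the bottom hook $\{a,b,c\}$ --- only top hooks and rectangles. Getting a configuration that threads this needle, and verifying the square decompositions by hand (a handful of $2\times 2$ change-of-basis computations), is the delicate part; the Kan-extension packaging and the copy-paste reduction are then routine given the machinery already developed in \Cref{sec:counter-examples}.
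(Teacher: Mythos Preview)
Your overall strategy---construct a small indecomposable gadget whose square restrictions only produce rectangles and top hooks, then copy-paste into the bottom-left corner of a larger grid---matches the paper exactly. However, you over-engineer the gadget construction. The paper does not use Kan extensions here at all: it simply reuses the explicit $3\times 2$ module on the right of \Cref{ex:3-2-indecomposable}, observes that it is indecomposable with local endomorphism ring and has a point of dimension~$2$, and checks by direct inspection that each of its square restrictions decomposes into intervals from $\mathcal{R}\cup\{\{b,c,d\}\}$ (in fact from $\mathcal{R}$, except the outermost square which yields a single top hook). The vertical case $|Y|\ge 3$ uses the obvious transpose. Your proposal to package this via a fence poset and $\Ran_\phi$, invoking \Cref{lem:endoring-Kan}, would work, but it is machinery imported from the previous subsection where the gadget genuinely grows with the grid size; here the gadget is fixed and tiny, so the Kan-extension layer buys nothing.

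One small correction: reflecting across the anti-diagonal (or swapping coordinates) fixes both hooks setwise, since it swaps $b\leftrightarrow c$ and fixes $a,d$. To turn top hooks into bottom hooks you need pointwise vector-space duality (equivalently, the order-reversing automorphism $(i,j)\mapsto(-i,-j)$), which is precisely what the paper does for the $\mathcal{R}\cup\{\{a,b,c\}\}$ clause.
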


\begin{proof}
    Recall that the persistence module to the right of \Cref{ex:3-2-indecomposable} is indecomposable with local endomorphism ring. Since it is not of pointwise dimension 0 or 1, it is not interval-decomposable. Furthermore, direct inspection reveals that its restrictions to squares are all interval-decomposable, with their barcodes included in $\mathcal{R}\cup\{\{b,c,d\}\}$---in fact in $\mathcal{R}$ except for the restriction to the outermost square whose barcode is made of one copy of $\{b,c,d\}$. Thus, when $|X| \geq  3$ and $|Y| \geq 2$, we can copy-paste the spaces and maps of this bimodule to the most bottom-left subgrid of size $3 \times 2$ of $\poset$, and set all other spaces and maps to zero, to get our module~$M$. When $|X| \geq 2$ and $|Y| \geq 3$, the result is proven similarly using the following vertical analogue of the previous bimodule:
    \begin{equation*}
        \begin{tikzcd}[arrows=-stealth, ampersand replacement=\&]
            \field \arrow[r, "1"]                              \& \field                               \\
            \field \arrow[u, "1"] \arrow[r, "\mymatrix{1\\0}"] \& \field^2 \arrow[u, "\mymatrix{1&0}"'] \\
            0 \arrow[u, "0"] \arrow[r, "0"]                    \& \field \arrow[u, "\mymatrix{1\\1}"'] 
        \end{tikzcd}.
    \end{equation*}
    Taking the duals%
    \footnote{See the end of the proof of \Cref{lem:zero-spaces} for more details on duality.}
    of the previously constructed persistence modules yields the result for~$\mathcal{R}\cup\{\{a,b,c\}\}$.
\end{proof}

%%%%%%%%%%%%%%%%%%
\section{New proof of the Pyramid basis theorem}
\label{sec:example}
In \cite{carlsson2009zigzag} the authors show that a large pyramidal diagram can be associated to a sufficiently tame real valued function $f : X \to \R$. We briefly recall their construction.  Under the assumption that the function is of \emph{Morse type}, there exists a finite set of \emph{critical values} $a_1 < a_2 < \ldots < a_n$, and we may choose real values $s_i$ satisfying
\begin{equation}
  -\infty < s_0 < a_1 < s_1 < \dots < a_n < s_n < +\infty.
\end{equation}
Here the idea is that the preimage of $[s_i, s_{i+1}]$ deformation retracts onto the fiber over $a_{i+1}$, and that the fiber is constant (up to homotopy) between critical values. This gives a way of studying how the topology of the fibers connect across scales. 

Denoting $X_i^j = f^{-1}[s_i,s_j]$ and $ {}_i^jX = X_0^i \cup X_j^n$, obvious inclusions yield a commutative diagram, such as the following one for $n=2$:

%
%
%uppose that $(X,f)$ is of Morse-type. This means that there is a finite set of real values $a_1 < ... < a_n$, called \emph{critical values}, such that for each interval $I$ among $(-\infty, a_1), (a_1,a_2),\dots, (a_n,+\infty)$, the slice $X_I$ is homeomorphic to a product of the form $Y\times I$, where $Y$ is a locally connected topological space, and such that the following diagram commutes:
%\begin{equation}
%  \begin{tikzcd}
%    f^{-1}(I) \arrow[rr, "f"] \arrow[rd, "\rotatebox{330}{\(\sim\)}"] & & I \\
%    & Y\times I \arrow[ru, "\pi"'] &  
%    \end{tikzcd}
%\end{equation}
%where $\pi$ is the canonical projection onto the second factor. Moreover, each homeomorphism $Y\times I \to f^{-1}(I)$ should extend to a continuous function $Y\times \bar I \to f^{-1}(\bar I)$. Assume in addition that the homology of the sublevel sets $f^{-1}(-\infty,t)$ with coefficient in $\field$ is finite-dimensional.
%
%From the tameness assumption on $(X,f)$, one can choose real values $s_i$ such that 
%\begin{equation}
%  -\infty < s_0 < a_1 < s_1 < \dots < a_n < s_n < +\infty.
%\end{equation}
%Denoting $X_i^j = f^{-1}[s_i,s_j]$ and $ {}_i^jX = X_0^i \cup X_j^n$, obvious inclusions yields a gigantic pyramidal commutative diagram, as the following one for $n=2$:

\medskip
\begin{tikzpicture}[baseline= (a).base]
\node[scale=.9] (a) at (0,0){
  \begin{tikzcd}
    &                                     & {(X_0^2,X_0^2)} \arrow[r]           & {(X_0^2, {}_2^2X)}                     &                                        &                           \\
    & {(X_0^1,X_0^1)} \arrow[r]           & {(X_0^2,X_0^1)} \arrow[r] \arrow[u] & {(X_0^2, {}_1^2X)} \arrow[u] \arrow[r] & {(X_0^2, {}_1^1X)}                     &                           \\
{(X_0^0,X_0^0)} \arrow[r] & {(X_0^1,X_0^0)} \arrow[u] \arrow[r] & {(X_0^2,X_0^0)} \arrow[u] \arrow[r] & {(X_0^2, {}_0^2X)} \arrow[u] \arrow[r] & {(X_0^2, {}_0^1X)} \arrow[u] \arrow[r] & {(X_0^2, {}_0^0X)}        \\
X_0^0 \arrow[r] \arrow[u] & X_0^1 \arrow[r] \arrow[u]           & X_0^2 \arrow[r] \arrow[u]           & {(X_0^2,X_2^2)} \arrow[r] \arrow[u]    & {(X_0^2,X_1^2)} \arrow[r] \arrow[u]    & {(X_0^2,X_0^2)} \arrow[u] \\
    & X_1^1 \arrow[u] \arrow[r]           & X_1^2 \arrow[u] \arrow[r]           & {(X_1^2,X_2^2)} \arrow[u] \arrow[r]    & {(X_1^2,X_1^2)} \arrow[u]              &                           \\
    &                                     & X_2^2 \arrow[u] \arrow[r]           & {(X_2^2,X_2^2)} \arrow[u]              &                                        &                          
\end{tikzcd}};
\end{tikzpicture}
\medskip

Building on the work of \cite{carlsson2009zigzag}, it is shown in \cite{bendich2013homology}  that the above diagram, upon application of homology, decomposes into a direct sum of interval modules, where each interval is the intersection of a rectangle in $\Z^2$ with the pyramid above. This result is referred to as the \emph{pyramid basis theorem}. We now give a new proof of this fact using \cref{thm:weak-exact-rec-dec}. More precisely, we show the following:
\begin{theorem}[Pyramid basis theorem]
The homology pyramid as constructed in \cite{carlsson2009zigzag} is interval-decomposable, where the intervals are restrictions of rectangles in $\Z^2$ to the pyramid. 
\end{theorem}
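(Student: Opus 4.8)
The plan is to reduce the pyramid basis theorem to Theorem~\ref{thm:weak-exact-rec-dec} by exhibiting the homology pyramid as (up to a change of indexing poset) the restriction to a sub-poset of an algebraically weakly exact bimodule defined on a genuine grid, and then transporting the rectangle decomposition back. Concretely, I first want to recognize the pyramid $\pyramid$ of \cite{carlsson2009zigzag} as a sub-poset of $\Z^2$: the entries $(X_0^i, {}_j^j X)$, $(X_0^i, X_j^k)$, etc., are naturally indexed by pairs of parameters, and the combinatorial shape of the pyramid is exactly the intersection of $\{(p,q)\in\Z^2 \mid -n\le q\le p\le n\}$ (or a similar ``staircase'' region) with a suitable affine transform. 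The key point is that the pyramid is a \emph{convex} region of $\Z^2$ once embedded correctly, so an interval that is the trace of a rectangle of $\Z^2$ on $\pyramid$ is simply a rectangle module of the sub-poset.

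Next I would promote the homology pyramid $M := \homology_*(\pyramid)$ to a bimodule on a full finite grid $\grid \supseteq \pyramid$ by right Kan extension (the ``floor module'' construction of \cite[Sec.~2.5]{Botnan2016}, already used in Section~\ref{sec:counter-examples}): set $\extension{M} := \Ran_\phi M$ where $\phi\colon \pyramid \hookrightarrow \grid$ is the inclusion. Because $\phi$ is fully faithful, $\extension{M}_{|\pyramid} \cong M$, so it suffices to show $\extension{M}$ is rectangle-decomposable on the grid and that each rectangle of $\grid$ restricts to a rectangle-trace on $\pyramid$ (or to $0$). By Theorem~\ref{thm:weak-exact-rec-dec} it is enough to check that $\extension{M}$ is algebraically weakly exact, i.e. that for every $s\le t$ in $\grid$ one has $\Ima\mor_s^t = \Ima\mor_{(t_x,s_y)}^t\cap\Ima\mor_{(s_x,t_y)}^t$ and $\Ker\mor_s^t = \Ker\mor_s^{(t_x,s_y)}+\Ker\mor_s^{(s_x,t_y)}$. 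Since right Kan extensions are built from limits, the verification splits into two parts: on squares lying inside $\pyramid$, weak exactness of $M$ itself must be established; on squares touching the boundary of $\pyramid$, weak exactness of $\extension{M}$ follows formally from the floor-module description plus the convexity of $\pyramid$ (the same bookkeeping as in the proof of \Cref{prop:counter-example-grid}).

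The substantive step is therefore checking algebraic weak exactness of the homology pyramid \emph{inside} $\pyramid$, and here I would exploit the Mayer--Vietoris structure that is already baked into the construction of \cite{carlsson2009zigzag}: every commutative square in the pyramid diagram arises from a pair of inclusions of subspaces of $X$ of the form $(X_0^2, A)\leftarrow (X_0^2, A\cap B)\rightarrow$ etc., or the ``absolute/relative'' swaps. For such squares the relevant homology-level identities on kernels and images are exactly the exactness statements in the long exact sequences of pairs and the Mayer--Vietoris sequences; concretely, the key identity $\Ker \mor_s^{(t_x,s_y)} + \Ker \mor_s^{(s_x,t_y)} = \Ker\mor_s^t$ translates into the assertion that a class dying when you include along both legs dies along the diagonal, which is precisely where the excision/Mayer--Vietoris argument of \cite{carlsson2009zigzag,bendich2013homology} is used. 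I expect this to be the main obstacle: one must carefully enumerate the finitely many \emph{types} of squares appearing in the pyramidal diagram (absolute--absolute, absolute--relative, relative--relative, and the ``corner'' where the diagram folds) and verify the two equalities for each type, using in each case the appropriate long exact sequence. Once weak exactness is in hand, Theorem~\ref{thm:weak-exact-rec-dec} gives the rectangle decomposition of $\extension{M}$, restriction along $\phi$ gives the interval decomposition of $M$ with each interval a rectangle-trace, and since these traces are exactly the ``intersections of rectangles in $\Z^2$ with the pyramid'' of \cite{bendich2013homology}, the theorem follows.
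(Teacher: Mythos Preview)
Your overall strategy---extend the homology pyramid to a full finite grid, verify (algebraic) weak exactness there, apply Theorem~\ref{thm:weak-exact-rec-dec}, then restrict the rectangle summands back to the pyramid---is exactly the paper's approach, and your plan to handle squares lying inside the pyramid via the relative Mayer--Vietoris sequence is correct (the paper does this in one line, noting that such squares are in fact \emph{strongly} exact).

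The genuine gap is in your treatment of squares touching the boundary. Your claim that weak exactness of the right Kan extension ``follows formally from the floor-module description plus the convexity of~$\pyramid$ (the same bookkeeping as in the proof of \Cref{prop:counter-example-grid})'' is incorrect on two counts. First, right Kan extension does not preserve weak exactness: take the hook $\{a,b,c\}$ in the $2\times 2$ square with $M_a=M_b=M_c=\field$ and identity maps; the Ran extension puts $0$ at~$d$, and the resulting square has $\Ker\mor_a^b+\Ker\mor_a^c=0\neq\field=\Ker\mor_a^d$. Second, \Cref{prop:counter-example-grid} is evidence \emph{against} your claim, not for it: there $\Ran_\phi$ is used precisely to manufacture a module that is \emph{not} rectangle-decomposable, hence not weakly exact. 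The paper instead extends the pyramid by an ad~hoc construction: pullbacks in the lower-left off-pyramid region, pushouts in the upper-right, and zeros in the upper-left and lower-right. The point is that iterated pullbacks and pushouts preserve \emph{strong} exactness (this is cited from \cite[Sec.~5.1]{botnan2018decomposition}), while squares with a zero space at their \emph{top-left} or \emph{bottom-right} corner are weakly exact simply because the diagonal map factors through zero---note that a zero at the top-right or bottom-left corner would \emph{not} be harmless, which is exactly why pushouts/pullbacks are needed in those regions rather than zeros. Your Ran extension happens to coincide with this construction for the homology pyramid (the pushouts turn out to be zero because the NE boundary consists of pairs $(X,X)$ with trivial relative homology), but that is a feature of this particular diagram, not a formal consequence of the floor-module construction.
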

To simplify the notation we prove the case for $n=2$ and it will be evident that the argument generalizes. 
First, extend the homology diagram to a bimodule on a finite grid as follows:

\medskip
\begin{tikzpicture}[baseline= (a).base]
\node[scale=.75] (a) at (0,0){
  \begin{tikzcd}
   0\ar[r]&   0       \ar[r]                           & H_p{(X_0^2,X_0^2)} \arrow[r]           & H_p{(X_0^2, {}_2^2X)}    \ar[r]                 &            \text{PO}_2        \ar[r]                   &             \text{PO}_3            \\
0\ar[r] \ar[u]  & H_p{(X_0^1,X_0^1)} \arrow[r]  \ar[u]     & H_p{(X_0^2,X_0^1)} \arrow[r] \arrow[u] & H_p{(X_0^2, {}_1^2X)} \arrow[u] \arrow[r] & H_p{(X_0^2, {}_1^1X)}                     \ar[r]\ar[u]&                   \text{PO}_1  \ar[u]     \\
\ar[u] H_p{(X_0^0,X_0^0)} \arrow[r] & H_p{(X_0^1,X_0^0)} \arrow[u] \arrow[r] & H_p{(X_0^2,X_0^0)} \arrow[u] \arrow[r] & H_p{(X_0^2, {}_0^2X)} \arrow[u] \arrow[r] & H_p{(X_0^2, {}_0^1X)} \arrow[u] \arrow[r] & H_p{(X_0^2, {}_0^0X)}  \ar[u]      \\
H_p(X_0^0) \arrow[r] \arrow[u] & H_p(X_0^1 )\arrow[r] \arrow[u]           & H_p(X_0^2) \arrow[r] \arrow[u]           & H_p{(X_0^2,X_2^2)} \arrow[r] \arrow[u]    & H_p{(X_0^2,X_1^2)} \arrow[r] \arrow[u]    & H_p{(X_0^2,X_0^2)} \arrow[u] \\
   \text{PB}_1\ar[u]\ar[r] & H_p(X_1^1) \arrow[u] \arrow[r]           & H_p(X_1^2) \arrow[u] \arrow[r]           & H_p{(X_1^2,X_2^2)} \arrow[u] \arrow[r]    & H_p{(X_1^2,X_1^2)} \arrow[u]         \ar[r]     &                       \ar[u]   0 \\
    \text{PB}_3\ar[r]\ar[u]  &                \text{PB}_2 \ar[r]\ar[u]                     & H_p(X_2^2) \arrow[u] \arrow[r]           & H_p{(X_2^2,X_2^2)} \arrow[u]   \ar[r]           &                  0  \ar[r]\ar[u]                    &               \ar[u]    0       
\end{tikzcd}};
\end{tikzpicture}
\medskip

Here $\text{PB}_i$ denotes pullback and $\text{PO}_i$ denotes pushout.  Inductively these are defined (up to canonical isomorphism) by 
\begin{equation*} 
  \begin{split}
    \text{PB}_1 &= \ker \left(H_p(X_0^0)\oplus H_p(X_1^1) \to H_p(X_0^1)\right)\\
    \text{PB}_2 &= \ker\left(H_p(X_1^1)\oplus H_p(X_2^2)\to H_p(X_1^2)\right)\\
    \text{PB}_3 &= \ker\left(\text{PB}_1\oplus \text{PB}_2\to H_p(X_1^1) \right).
  \end{split}
\end{equation*}
and dually for the pushouts, with kernels replaced by cokernels.  By \cref{thm:weak-exact-rec-dec} it suffices to show that the extended diagram is weakly exact. The fact that any square with vertices on the original ''pyramid'' is strongly exact (i.e. the sequence~\eqref{eq:exact-seq} induced by such a square is exact) follows from the exactness of the relative Mayer--Vietoris sequence. Morever, as remarked in \cite[Section~5.1]{botnan2018decomposition}, the extension of the pyramid to pullbacks and pushouts preserves strong exactness (and thus weak exactness). It remains to consider squares with a 0 vector space as either its top-left or bottom-right corner. The fact that such squares are weakly exact is an easy consequence of commutativity. We conclude that the bimodule shown above is weakly exact and therefore rectangle-decomposable. The restrictions of the rectangle summands to the original homology pyramid give the intervals in the \emph{pyramid basis theorem}. 
\bibliography{mybib}

\begin{thebibliography}{10}

\bibitem{Azumaya1950}
Gor{\^{o}} Azumaya.
\newblock {Corrections and supplementaries to my paper concerning
  Krull-Remak-Schmidt's theorem}.
\newblock {\em Nagoya Mathematical Journal}, 1:117--124, 1950.

\bibitem{bauer2019cotorsion}
Ulrich Bauer, Magnus~B Botnan, Steffen Oppermann, and Johan Steen.
\newblock Cotorsion torsion triples and the representation theory of filtered
  hierarchical clustering.
\newblock {\em arXiv preprint arXiv:1904.07322}, 2019.

\bibitem{bendich2013homology}
Paul Bendich, Herbert Edelsbrunner, Dmitriy Morozov, Amit Patel, et~al.
\newblock Homology and robustness of level and interlevel sets.
\newblock {\em Homology, Homotopy and Applications}, 15(1):51--72, 2013.

\bibitem{Botnan2016}
Magnus Botnan and Michael Lesnick.
\newblock Algebraic stability of zigzag persistence modules.
\newblock {\em Algebraic \& geometric topology}, 18(6):3133--3204, 2018.

\bibitem{botnan2015interval}
Magnus~Bakke Botnan.
\newblock Interval decomposition of infinite zigzag persistence modules.
\newblock {\em Proceedings of the American Mathematical Society},
  145(8):3571--3577, 2017.

\bibitem{botnan2018decomposition}
Magnus~Bakke Botnan and William Crawley-Boevey.
\newblock Decomposition of persistence modules.
\newblock {\em To appear in the {Proceedings of the AMS}}, 2018.
\newblock \href {http://arxiv.org/abs/1811.08946} {\path{arXiv:1811.08946}}.

\bibitem{botnan_et_al:LIPIcs:2020:12180}
Magnus~Bakke Botnan, Vadim Lebovici, and Steve Oudot.
\newblock {On Rectangle-Decomposable 2-Parameter Persistence Modules}.
\newblock In {\em Proc. 36th International Symposium on Computational Geometry
  (SoCG 2020)}, pages 22:1--22:16, 2020.

\bibitem{carlsson2010zigzag}
Gunnar Carlsson and Vin De~Silva.
\newblock Zigzag persistence.
\newblock {\em Foundations of computational mathematics}, 10(4):367--405, 2010.

\bibitem{carlsson2009zigzag}
Gunnar Carlsson, Vin De~Silva, and Dmitriy Morozov.
\newblock Zigzag persistent homology and real-valued functions.
\newblock In {\em Proceedings of the twenty-fifth annual symposium on
  Computational geometry}, pages 247--256. ACM, 2009.

\bibitem{chazal2016structure}
Fr{\'e}d{\'e}ric Chazal, Vin De~Silva, Marc Glisse, and Steve Oudot.
\newblock {\em The structure and stability of persistence modules}.
\newblock Springer, 2016.

\bibitem{Cochoy2016}
J{\'{e}}r{\'{e}}my Cochoy and Steve Oudot.
\newblock {Decomposition of exact pfd persistence bimodules}.
\newblock {\em Discrete and Computational Geometry}, 2019.
\newblock To appear, currently available as arXiv preprint 1605.09726.

\bibitem{cohen2007stability}
David Cohen-Steiner, Herbert Edelsbrunner, and John Harer.
\newblock Stability of persistence diagrams.
\newblock {\em Discrete \& Computational Geometry}, 37(1):103--120, 2007.

\bibitem{cohen2006vines}
David Cohen-Steiner, Herbert Edelsbrunner, and Dmitriy Morozov.
\newblock Vines and vineyards by updating persistence in linear time.
\newblock In {\em Proceedings of the twenty-second annual symposium on
  Computational geometry}, pages 119--126, 2006.

\bibitem{Crawley-Boevey2012}
William Crawley-Boevey.
\newblock Decomposition of pointwise finite-dimensional persistence modules.
\newblock {\em Journal of Algebra and its Applications}, 14(05):1550066, 2015.

\bibitem{dey2019generalized}
Tamal~K Dey and Cheng Xin.
\newblock Generalized persistence algorithm for decomposing multi-parameter
  persistence modules.
\newblock {\em arXiv preprint arXiv:1904.03766}, 2019.

\bibitem{edelsbrunner2008persistent}
Herbert Edelsbrunner and John Harer.
\newblock Persistent homology-a survey.
\newblock {\em Contemporary mathematics}, 453:257--282, 2008.

\bibitem{escolar2016persistence}
Emerson~G Escolar and Yasuaki Hiraoka.
\newblock Persistence modules on commutative ladders of finite type.
\newblock {\em Discrete \& Computational Geometry}, 55(1):100--157, 2016.

\bibitem{Gabriel1972}
Peter Gabriel.
\newblock {Unzerlegbare Darstellungen I}.
\newblock {\em manuscripta mathematica}, 6(1):71--103, mar 1972.
\newblock URL: \url{http://link.springer.com/10.1007/BF01298413}, \href
  {https://doi.org/10.1007/BF01298413} {\path{doi:10.1007/BF01298413}}.

\bibitem{kim2018generalized}
Woojin Kim and Facundo Memoli.
\newblock Generalized persistence diagrams for persistence modules over posets.
\newblock {\em arXiv preprint arXiv:1810.11517}, 2018.

\bibitem{MacLane1998}
Saunders {Mac Lane}.
\newblock {\em {Categories For The Working Mathematician}}, volume~5 of {\em
  Graduate Texts in Mathematics}.
\newblock Springer-Verlag New York, 1998.
\newblock URL:
  \url{http://www.springer.com/gp/book/9780387984032?wt{\_}mc=GoogleBooks.GoogleBooks.3.EN{\&}token=gbgen},
  \href {https://doi.org/10.1007/978-1-4757-4721-8}
  {\path{doi:10.1007/978-1-4757-4721-8}}.

\bibitem{milosavljevic2011zigzag}
Nikola Milosavljevi{\'c}, Dmitriy Morozov, and Primoz Skraba.
\newblock Zigzag persistent homology in matrix multiplication time.
\newblock In {\em Proceedings of the twenty-seventh annual symposium on
  Computational geometry}, pages 216--225. ACM, 2011.

\bibitem{morozov2008homological}
Dmitriy Morozov.
\newblock {\em Homological illusions of persistence and stability}.
\newblock PhD thesis, Duke University, 2008.

\bibitem{oudot2015persistence}
Steve~Y Oudot.
\newblock {\em Persistence theory: from quiver representations to data
  analysis}, volume 209.
\newblock American Mathematical Society Providence, RI, 2015.

\end{thebibliography}

\appendix

%%%%%%%%%%%%%%%%%%
\section{Proof of the simple facts from Section~\ref{sec:check}} 
\label{sec:proof_simple_facts}
\begin{lemma}\label{lem:simple_facts}
  Consider the following commutative square (left) and pfd persistence bimodule indexed over it (right):
  \[
  \xymatrix{
    \bullet_c \ar[r] & \bullet_d &&& C \ar^-\delta[r] & D\\
    \bullet_a \ar[u]\ar[r] & \bullet_b \ar[u] &&& A \ar^-\beta[u] \ar^-{\alpha}[r] & B \ar_-{\gamma}[u]
  }
  \]
  Then:
  \begin{align*}
    \dim (\Ima \gamma \cap \Ima \delta) \quad = & \quad \#\left\{ \mbox{intervals of type}\ \xymatrix{\bullet_c\ar@{-}[rr] & \bullet^d & \bullet_b}\ \mbox{in the barcode}\ \right.\\
    & \quad\quad\quad\quad\quad\quad\quad\quad\quad\quad\quad \left. \mbox{of the zigzag}\ \xymatrix{C \ar^-\delta[r] & D & \ar_-{\gamma}[l] B}\right\}; \\[0.5em]
    \dim (\Ker \alpha + \Ker \beta) \quad = & \quad \dim (A) - \#\left\{ \mbox{intervals of type}\ \xymatrix{\bullet_c\ar@{-}[rr] & \bullet_a & \bullet_b}\ \mbox{in the}\ \right.\\
    & \quad\quad\quad\quad\quad\quad\quad\quad \left. \mbox{barcode of the zigzag}\ \xymatrix{C &  \ar_-\beta[l] A \ar^-{\alpha}[r] & B}\right\}. \\
  \end{align*}
  \end{lemma}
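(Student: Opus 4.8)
The plan is to derive both identities from a single structural observation about interval decompositions of a three-term zigzag; although the statement mentions a commutative square, each identity only involves one of its two legs, viewed as a zigzag on a poset with three elements, and commutativity plays no role.

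For the first identity, fix a decomposition of the zigzag $C \xrightarrow{\delta} D \xleftarrow{\gamma} B$ into interval modules, which exists by the structure theorem for zigzag modules~\cite{botnan2015interval,carlsson2010zigzag}, and write the space $D$ accordingly as the direct sum of the middle terms $D_k$ of the summands. Over a three-element poset an interval module is one-dimensional on a connected subinterval of its support and zero elsewhere, with each structure map equal to $0$ or $\identity_\field$; hence the restriction of $\delta$ to the $k$-th summand is an isomorphism onto $D_k$ when that summand's support contains both the $C$-node and the $D$-node, and is zero otherwise. Since images of a direct sum of linear maps add, $\Ima\delta$ is the sum of the $D_k$ over the summands whose support contains $\{c,d\}$, and likewise $\Ima\gamma$ is the sum of the $D_k$ over the summands whose support contains $\{d,b\}$. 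Being subsums of one and the same direct-sum decomposition of $D$, these two subspaces meet exactly in the sum of the $D_k$ over the summands whose support is all of $\{c,d,b\}$ --- that is, the intervals of the type displayed in the statement --- so $\dim(\Ima\gamma\cap\Ima\delta)$ equals their number, which is the first equality.

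For the second identity I would run the dual bookkeeping on the zigzag $C \xleftarrow{\beta} A \xrightarrow{\alpha} B$, whose middle node now carries the source. Writing $A=\bigoplus_k A_k$ along an interval decomposition, $\Ker\alpha$ is the sum of the $A_k$ over the summands on which $\alpha$ vanishes --- i.e.\ those supported on a connected subinterval containing the $A$-node but not the $B$-node --- and similarly for $\Ker\beta$; since a connected subinterval containing the $A$-node together with both the $B$-node and the $C$-node must be all of $\{c,a,b\}$, the sum $\Ker\alpha+\Ker\beta$ is the sum of the $A_k$ over every summand nonzero at the $A$-node \emph{except} those of full support $\{c,a,b\}$, whereas $\dim A$ counts the $A_k$ over \emph{all} summands nonzero at the $A$-node. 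Subtracting yields precisely the number of full-support summands, as claimed. (Equivalently, one may apply pointwise vector-space duality, which sends this zigzag to $C^*\to A^*\leftarrow B^*$, preserves the barcode, and satisfies $(\Ker\alpha+\Ker\beta)^\perp=\Ima\alpha^*\cap\Ima\beta^*$, hence $\dim(\Ker\alpha+\Ker\beta)=\dim A-\dim(\Ima\alpha^*\cap\Ima\beta^*)$, reducing to the first identity.)

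The one point that needs care is the structural claim that $\Ima\delta$ and $\Ima\gamma$ (resp.\ $\Ker\alpha$ and $\Ker\beta$) are subsums of the same direct-sum decomposition at the middle node; once that is granted, what remains is a short case analysis over the connected subintervals of a three-element quiver, so I expect the main effort here to be presentational rather than mathematical.
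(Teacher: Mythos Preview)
Your proof is correct and follows essentially the same approach as the paper: decompose the zigzag into interval summands, observe that $\Ima\gamma$ and $\Ima\delta$ are each subsums of the induced direct-sum decomposition of $D$, and intersect. The only cosmetic difference is that the paper picks a basis of $D$ compatible with the decomposition and tracks individual basis vectors, whereas you work directly with the summands; for the second identity the paper simply invokes duality, while you also spell out the direct kernel-side bookkeeping.
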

\begin{proof}
  We only prove the first equality, as the second one follows by duality. Take an interval decomposition of the zigzag $\xymatrix{C \ar^-\delta[r] & D & \ar_-{\gamma}[l] B}$, and pick a basis~$(\xi_1, \cdots, \xi_l)$ of~$D$ that is compatible with this decomposition. This means that each basis element~$\xi_i$ lies in the span of a unique interval summand of the zigzag at~$D$. Then, by restriction we have:
\begin{align*}
  \xi_i\in \Ima \gamma \quad \Longleftrightarrow & \quad \xi_i \in \Span \left(\mbox{summands of type}\ \xymatrix{\bullet_d\ar@{-}[r] & \bullet_b}\right)\\[0.5em]
  & \quad\quad + \Span \left(\mbox{summands of type}\ \xymatrix{\bullet_c\ar@{-}[rr] & \bullet^d & \bullet_b}\right); \\[1em]
  \xi_i\in \Ima \delta \quad \Longleftrightarrow & \quad \xi_i \in \Span \left(\mbox{summands of type}\ \xymatrix{\bullet_c\ar@{-}[r] & \bullet_d}\right)\\[0.5em]
  & \quad\quad + \Span \left(\mbox{summands of type}\ \xymatrix{\bullet_c\ar@{-}[rr] & \bullet^d & \bullet_b}\right).
\end{align*}
The spans of distinct summands being in direct sum in~$D$, we deduce that
\[
\xi_i \in \Ima \gamma \cap \Ima \delta \quad \Longleftrightarrow \quad \xi_i \in \Span \left(\mbox{summands of type}\ \xymatrix{\bullet_c\ar@{-}[rr] & \bullet^d & \bullet_b}\right).
\]
Hence the result.  
\end{proof}

\section{Endomorphism rings of Kan extensions}
\label{sec:lem-kan}
\begin{lemma}
    \label{lem:endoring-Kan}
    Let $P$ and $Q$ be two subposets of $\R^d$, $M$ be a pfd persistence module over~$P$ and $\phi : P \hookrightarrow Q$ be a fully faithful monomorphism of posets. Then, the endomorphism ring of $\Lan_\phi M$ (resp. of $\Ran_\phi M$) is isomorphic to that of~$M$.
  \end{lemma}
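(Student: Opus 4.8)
The plan is to use the universal property of right Kan extensions (dually, left Kan extensions) to transport endomorphisms back and forth between $M$ and $\Ran_\phi M$. Recall that $\Ran_\phi : \catper{Q'} \to \catper{Q}$, where $Q' = \phi(P)$, is right adjoint to the restriction functor $\phi^* : \catper{Q} \to \catper{P}$; more precisely, there is a natural bijection $\Hom_{\catper{Q}}(N, \Ran_\phi M) \cong \Hom_{\catper{P}}(\phi^* N, M)$ for every $N$. The first step is to specialize this adjunction to $N = \Ran_\phi M$ itself, giving a bijection $\End(\Ran_\phi M) \cong \Hom(\phi^* \Ran_\phi M, M)$. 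Since $\phi$ is fully faithful, the counit of this adjunction $\epsilon : \phi^* \Ran_\phi M \to M$ is an isomorphism (this is the standard fact that Kan extensions along fully faithful functors are genuine extensions, i.e. they restrict back to the original functor; it can also be read off the explicit ''floor module'' description used elsewhere in the paper). Post-composing with $\epsilon$ therefore yields a bijection $\End(\Ran_\phi M) \cong \Hom(\phi^* \Ran_\phi M, M) \cong \End(M)$.

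The second step is to verify that this bijection is a ring homomorphism, i.e. that it respects composition and the identity. Unwinding the adjunction, the map $\End(\Ran_\phi M) \to \End(M)$ sends $f$ to $\epsilon \circ \phi^*(f) \circ \epsilon^{-1}$; the inverse sends $g \in \End(M)$ to $\Ran_\phi(g)$ (using that $\Ran_\phi$ is a functor and that, on the essential image, it is quasi-inverse to $\phi^*$). Both $\phi^*$ and $\Ran_\phi$ are functors, and conjugation by the fixed isomorphism $\epsilon$ is a ring isomorphism, so the composite is a ring isomorphism. Concretely: $\epsilon \circ \phi^*(f_1 \circ f_2)\circ \epsilon^{-1} = (\epsilon \circ \phi^*(f_1)\circ\epsilon^{-1})\circ(\epsilon\circ \phi^*(f_2)\circ\epsilon^{-1})$ and $\epsilon\circ\phi^*(\identity)\circ\epsilon^{-1} = \identity_M$. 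That both constructions are mutually inverse follows from the triangle identities of the adjunction together with the fact that $\epsilon$ is an isomorphism.

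The only point requiring a little care is that $\phi$ maps $P$ into $Q$, not onto it, so one should first factor $\phi$ as $P \xrightarrow{\sim} \phi(P) \hookrightarrow Q$ and observe that both stages are fully faithful; the first contributes nothing and the second is the honest ''extension by limits'' step. The hard part, such as it is, is simply making sure the counit is an isomorphism under the fully-faithfulness hypothesis — but for functor categories into $\cat{vec}$ this is immediate from the pointwise formula $(\Ran_\phi M)_{\phi(p)} = \lim_{\phi(p)\leq \phi(q)} M_q = \lim_{p\leq q} M_q$, whose terminal object is $M_p$ (taking $q = p$), so that the canonical map $(\Ran_\phi M)_{\phi(p)} \to M_p$ is an isomorphism. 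The dual argument with colimits and the unit of the adjunction $\Lan_\phi \dashv \phi^*$ handles the left Kan extension case verbatim.
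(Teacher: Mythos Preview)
Your proof is correct and follows essentially the same route as the paper's: both use the adjunction $\Hom(\Lan_\phi M,-)\cong\Hom(M,\phi^*(-))$ (or its dual) together with the fact that $\phi^*\Ran_\phi M\cong M$ when $\phi$ is fully faithful. You are more explicit than the paper in checking that the resulting bijection is a ring map (via conjugation by the counit) and in verifying the counit isomorphism pointwise; the paper simply cites \cite[Cor.~X.3.3]{MacLane1998} for the latter and leaves the ring-compatibility implicit. Two minor slips to fix: the domain of $\Ran_\phi$ is $\catper{P}$, not $\catper{Q'}$; and in your pointwise computation the index $q=p$ is the \emph{initial} object of the comma poset $\{q:p\leq q\}$, not the terminal one (which is indeed what makes the limit collapse to $M_p$).
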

  \begin{proof}
    We prove the result for left Kan extensions, the case of right Kan extensions being similar. Since $\phi$ is fully faithful, we have by \cite[Cor.~X.3.3]{MacLane1998} that:
    \begin{equation}
      \label{eq:Lan-restric}
      \phi^*\left( \Lan_\phi M \right) \simeq M,
    \end{equation}
    where $\phi^*$ denotes the functor ``pre-composition by~$\phi$'' going from the category of pfd persistence modules over~$Q$ to the category of pfd persistence modules over~$P$. Moreover, the universality property of Kan extensions 
    gives a natural isomorphism:
    \begin{equation}
      \label{eq:adjunction-Lan}
      \Hom(\Lan_\phi M,-) \simeq \Hom(M, \phi^*(-)).
    \end{equation}
    Combining these two equations, we get:
    \begin{align*}
      \End(M) & \hspace{1.25em} = \hspace{1.2em} \Hom(M,M) \\
      &\overset{\text{Eq. \eqref{eq:Lan-restric}}}{\simeq} \Hom(M,\phi^*(\Lan_\phi M)) \\
      &\overset{\text{Eq. \eqref{eq:adjunction-Lan}}}{\simeq} \Hom(\Lan_\phi M, \Lan_\phi M) \\
      & \hspace{1.25em} = \hspace{1.2em} \End(\Lan_\phi M).
    \end{align*}
  \end{proof}

\end{document}